\newcommand{\R}{\mathcal{R}}
\newcommand{\Rbold}{\bm{\mathcal{R}}}
\newcommand{\Q}{\mathcal{Q}}
\renewcommand{\P}{\mathcal{P}}
\newcommand{\set}[1]{\left\{#1\right\}}
\newcommand{\abs}[1]{\left|#1\right|}
\newcommand{\norm}[1]{\left\lVert #1\right\rVert}
\newcommand{\paren}[1]{\left(#1\right)}
\newcommand{\ds}[1]{\displaystyle}
\newcommand{\balpha}{\bm{\alpha}}
\newcommand{\bbeta}{\bm{\beta}}
\newcommand{\bnu}{\bm{\nu}}
\providecommand*{\cupdot}{%
  \mathbin{%
    \mathpalette\@cupdot{}%
  }%
}
\newcommand*{\@cupdot}[2]{%
  \ooalign{%
    $\m@th#1\cup$\cr
    \hidewidth$\m@th#1\cdot$\hidewidth
  }%
}
\DeclareMathOperator{\E}{E}
\DeclareMathOperator{\OPT}{OPT}
\DeclareMathOperator{\V}{V}
\DeclareMathOperator{\OBJ}{OBJ}
\DeclareMathOperator{\FEAS}{FEAS}
\DeclareMathOperator{\supp}{supp}
\DeclareMathOperator{\SUPP}{SUPP}
\theoremstyle{plain}
\newtheorem{theorem}{Theorem}
\newtheorem{proposition}[theorem]{Proposition}
\newtheorem{corollary}{Corollary}
\newtheorem{conj}{Conjecture}
\newtheorem{obs}{Observation}
\newtheorem*{prob2}{Optimization Problem (OPT)}
\newtheorem*{prob3}{Optimization Problem (OPT 2)}
\theoremstyle{notation}
\newtheorem{lemma}{Lemma}
\newtheorem{claim}{Claim}
\theoremstyle{definition}
\theoremstyle{remark}
\theoremstyle{question}
\theoremstyle{construction}
\newtheorem*{const1}{Construction ($\bm{G_{\balpha}(n)}$)}
\newtheorem*{const2}{Construction ($\bm{\text{SUPP}_q(\balpha)}$)}
\theoremstyle{definition}
\title{The extremality of $2$-partite Tur\'{a}n graphs with respect to the number of colorings}
\author{Melissa M Fuentes \footnote{This document is the results of the author's doctoral dissertation. This research was supported in part by the Graduate Scholar's Award Fellowship from the University of Delaware and also in part by funds provided by the National Science Foundation Grant: 1855723.}\\ Department of Mathematics and Statistics\\ Villanova University\\ Villanova, PA 19085}
\date{Submitted: July 26, 2022}
\begin{document}

\maketitle

\begin{center}  Abstract   \end{center}
We consider a problem proposed by Linial and Wilf to determine the structure of graphs that allows the maximum number of $q$-colorings among graphs with $n$ vertices and $m$ edges. Let $T_r(n)$ denote the Tur\'{a}n graph - the complete $r$-partite graph on $n$ vertices with partition sizes as equal as possible. We prove that for all odd integers $q\geq 5$ and sufficiently large $n$, the Tur\'{a}n graph $T_2(n)$ has at least as many $q$-colorings as any other graph $G$ with the same number of vertices and edges as $T_2(n)$, with equality holding if and only if $G=T_2(n)$. Our proof builds on methods by Norine and by Loh, Pikhurko, and Sudakov, which reduces the problem to a quadratic program.

\flushleft

\setlength\parindent{20 pt}

\section{Introduction}\label{one}
For a positive integer $q$, let $[q] = \{1, 2, \ldots , q\}$. A function $f : V (G) \to [q]$ such that $f(x)\neq f(y)$ for every edge $xy$ of a graph $G$ is called a \textit{proper vertex coloring of $G$ in at most $q$ colors}, or simply a \textit{$q$-coloring} of $G$. The set $[q]$ is often referred to as the \textit{set of colors}. 

Let $P_G(q)$ denote the number of all $q$-colorings of a given graph $G$. This number was introduced and studied by Birkhoff \cite{Birkhoff}, who proved that it is always a polynomial in $q$, and is known as the \textit{chromatic polynomial} of $G$. The parameter $P_G(q)$ has been extensively studied over the past century. In particular, Linial \cite{Linial} and Wilf \cite{Wilf1, Wilf3, Wilf2} had independently posed the problem of determining the graphs that maximize the number of $q$-colorings among all graphs with $n$ vertices and $m$ edges. The problem was completely solved for $q=2$ by Lazebnik in \cite{FL1}, but remains largely open in general and has been a topic of extensive research. For a survey of numerous results on this problem, see Lazebnik \cite{FL6}. 

For a positive integer $r$, let $T_r(n)$ denote the \textit{$r$-partite Tur\'{a}n graph}, that is, the complete $r$-partite graph of order $n$ with all parts nearly equal in size (each part is of size $\lfloor n/r \rfloor$ or $\lceil n/r \rceil$). Let $t_r(n)$ denote the number of edges of $T_r(n)$. Our main motivation is the following conjecture made by Lazebnik in 1987, although it first appeared in print in \cite{FL4}. 

\begin{conj}[\cite{FL4}]\label{main_conj}
For all $n \geq r \geq 2$ and $q \geq r$, the Tur\'{a}n graph $T_r(n)$ is the only graph on $n$ vertices and $t_r(n)$ edges that attains the maximum number of $q$-colorings.
\end{conj}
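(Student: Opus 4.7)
The full conjecture remains open in general; my plan targets the case addressed in this paper, namely $r=2$, odd $q\ge 5$, and $n$ sufficiently large. First I observe a clean bipartite reduction: every bipartite graph on $n$ vertices has at most $\lfloor n^2/4\rfloor = t_2(n)$ edges, with equality only for $T_2(n)=K_{\lceil n/2\rceil,\lfloor n/2\rfloor}$, so any competitor $G\neq T_2(n)$ with the same vertex and edge counts must be non-bipartite and in particular contain an odd cycle. It therefore suffices to prove $P_G(q)<P_{T_2(n)}(q)$ for every non-bipartite $G$ with $n$ vertices and $t_2(n)$ edges.

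Second, because $G$ has as many edges as the extremal triangle-free graph on $n$ vertices, I would invoke an Erd\H{o}s--Simonovits-type stability theorem: $G$ admits a bipartition $V(G)=A\cupdot B$ with $|A|,|B|$ each equal to $n/2+o(n)$, at most $o(n^2)$ ``defect'' edges inside the parts, and the same number of missing cross-edges. This confines the subsequent analysis to graphs structurally close to $T_2(n)$.

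Third, adapting the method of Norine together with the entropy/quadratic-program reduction of Loh, Pikhurko, and Sudakov, I would encode the counting of $q$-colorings via profiles. To each $q$-coloring $f$ associate $(\bm{\alpha},\bm{\beta})\in\Rreal^q\times\Rreal^q$ with $\alpha_c=|f^{-1}(c)\cap A|$ and $\beta_c=|f^{-1}(c)\cap B|$; grouping colorings by profile and applying entropy estimates gives a bound of the form
\begin{equation*}
\log P_G(q)\;\le\;\max_{(\bm{\alpha},\bm{\beta})}\bigl[H_A(\bm{\alpha})+H_B(\bm{\beta})-\Phi_G(\bm{\alpha},\bm{\beta})\bigr]+o(n),
\end{equation*}
where $\Phi_G$ is a penalty term encoding the proper-coloring constraints from the edges of $G$. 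For $T_2(n)$, $\Phi_{T_2(n)}$ forces $\alpha_c\beta_c=0$ for every color $c$, so the optimal profiles correspond to partitions $[q]=S_A\sqcup S_B$, and a matching lower bound for $P_{T_2(n)}(q)$ picks out the partition(s) maximizing $\binom{q}{|S_A|}|S_A|^{|A|}|S_B|^{|B|}$.

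The main obstacle is the final comparison: I would have to show that for every non-Turán $G$ in the stability regime, the flexibility gained from missing cross-edges (allowing ``mixed'' color classes) is strictly outweighed by the entropy cost imposed by the defect edges. This calls for a careful second-order analysis of the program's Lagrangian around the Turán-type optima, uniformly in $G$, and is where the restriction to odd $q\ge 5$ is expected to enter: the symmetries and the spectral gap at the dominant optima of the quadratic program appear favorable only in this regime, while smaller $q$ or even $q$ produce near-degeneracies at leading order that the present argument cannot rule out. Combining the strict program-level inequality with the exponential multiplicative gap between main and error terms should yield $P_G(q)<P_{T_2(n)}(q)$ for all non-Turán $G$, delivering both the inequality and the uniqueness of $T_2(n)$ as the maximizer.
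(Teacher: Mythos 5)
You have correctly recognized that the displayed statement is a \emph{conjecture}, not something the paper proves in full; indeed the paper cites Ma and Naves' counterexamples showing it fails for various $(r,q)$, and the paper's actual contribution is the special case $r=2$, odd $q\geq 5$, $n$ large (Theorem~\ref{main_theorem}). Your plan targets this case, so I evaluate it against the paper's proof of Theorem~\ref{main_theorem}.

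The central gap is your second step. An Erd\H{o}s--Simonovits stability theorem asserts that a \emph{triangle-free} graph with close to $t_2(n)$ edges is close to $T_2(n)$. But the competitor $G$ here is constrained only to have $n$ vertices and $t_2(n)$ edges; nothing forces it to be triangle-free or even close to triangle-free. A graph with $t_2(n)$ edges can be a dense clique on roughly $n/\sqrt{2}$ vertices with the rest isolated --- very far from bipartite. So you cannot invoke stability to confine the analysis to graphs near $T_2(n)$; some other argument is required to establish that structure. The paper obtains this structural control from a completely different source: Theorem~\ref{LPS2} of Loh, Pikhurko, and Sudakov, which says that any maximizer of the number of $q$-colorings among $(n,m)$-graphs is $\epsilon n^2$-close to $G_{\balpha}(n)$ for some $\balpha$ solving $\OPT_q(\gamma)$ with $\gamma \approx m/n^2$. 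Solving $\OPT_q(\gamma)$ for $\gamma$ near $1/4$ (the paper's Theorem~\ref{Opt5}), combined with Corollary~\ref{G_graph_is_Turan} and Propositions~\ref{LPS1},~\ref{Eequality}, shows that this $G_{\balpha}(n)$ is in fact $T_2(n)$. This is the route you would need, in place of stability.

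Two further points. First, your third step (entropy/Lagrangian second-order analysis of profiles) is directionally similar in spirit to the Loh--Pikhurko--Sudakov reduction but is considerably vaguer than what is actually carried out: the paper's ``local'' result, Theorem~\ref{local_theorem} (following Norine's Lemma~4.2), proceeds by explicitly counting good/bad vertices, isolating the ``essential colors'' in each part, bounding $\P_G(\Rbold)$ by crude binomial estimates in Case~1 and by Lemma~\ref{FLBound} together with an edge-swap argument in Case~2. This is a concrete combinatorial computation, not a Lagrangian expansion, and making the Lagrangian argument airtight (especially the uniform strictness you need for uniqueness) is nontrivial. Second, your intuition about where the oddness of $q$ enters is not quite right: even $q$ is not a ``degenerate'' obstruction --- Norine already proved that case, and it is in some ways easier because $\lceil q/2\rceil=\lfloor q/2\rfloor$ makes the $\OPT$ solution symmetric. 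The paper handles odd $q$ precisely because the even case is done; the new difficulty is the asymmetry $\lceil q/2\rceil\neq\lfloor q/2\rfloor$ in solving $\OPT_q(\gamma)$, which the paper addresses in Lemmas~\ref{LemmaTech1},~\ref{Claim2} and the case analysis of Theorem~\ref{Opt5}.
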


When $q=r$, the statement follows from the celebrated Tur\'{a}n Theorem \cite{Turan}, since any graph with $n$ vertices and $t_r(n)$ edges that is not $T_r(n)$ does not have a $r$-coloring. Lazebnik proved Conjecture \ref{main_conj} when $r=2$ and $q\geq (n/2)^5$ in \cite{FL1} and when $n$ is a positive integer divisible by $r$ and $q \geq 2\binom{t_r(n)}{3}$ in \cite{FL3}. For $r=2$ and $q=3$, Lazebnik, Pikhurko, and Woldar \cite{FL4} proved the conjecture when $n$ is even, as well as an asymptotic version when $q=4$ for even $n$, as long as $n$ is sufficiently large. Their result for $q=4$ was extended by Tofts \cite{Tofts} to all $n \geq 4$. Loh, Pikhurko, and Sudakov proved the conjecture for $q=r+1$ for large $n$ in \cite{Loh}, and their result was later extended to all $n \geq r$ by Lazebnik and Tofts in \cite{FL5}. This was greatly improved by Norine \cite{Norine}, who developed further powerful techniques from \cite{Loh}, and showed that for any positive integers $q$ and $r$ such that $2 \leq r <q$ and $r$ divides $q$, Conjecture \ref{main_conj} is true, provided that $n$ is sufficiently large. The most recent result was by Ma and Naves \cite{Ma}, who showed that Conjecture \ref{main_conj} is true for all $q\geq 100 r^2/(\log(r))$, for $n$ is sufficiently large.

Conjecture \ref{main_conj} was widely believed to be true, especially since there are many results confirming it in several cases. However, Ma and Naves \cite{Ma} constructed counterexamples in some ranges of $r$ and $q$. For example, if $r+3\leq q \leq 2r-7$ and $r \geq 10$ then Conjecture \ref{main_conj} is false. Also, for all integers $r \geq 50000$ and $q_0$ such that $20r \leq q_0 \leq \frac{r^2}{200\log(r)}$, there exists an integer $q$ within distance at most $r$ from $q_0$, such that Conjecture \ref{main_conj} is false for $r$ and $q$. Nevertheless, Conjecture \ref{main_conj} is still believed to be true for integers $r$ and $q$, where $2\leq r \leq 9$ and $q \geq r$. The first case for which there are no explicit or asymptotic results is for $r=2$ and odd $q\geq 5$. This motivated the research done in this paper. The main result is the following theorem. 
\begin{theorem}\label{main_theorem} 
Let $q \geq 5$ be an odd integer. Then for all sufficiently large $n$, the Tur\'{a}n graph $T_2(n)$ has more $q$-colorings than any other graph with the same number of vertices and edges. 
\end{theorem}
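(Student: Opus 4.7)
I would adapt the quadratic-program framework of Loh-Pikhurko-Sudakov \cite{Loh} and Norine \cite{Norine} to the odd-$q$ case, proceeding in three stages.

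\emph{Stage 1 (stability).} Since $P_{T_2(n)}(q) = \Omega\bigl((q/2)^n\bigr)$---a lower bound obtained by assigning disjoint color palettes of sizes $\lceil q/2\rceil$ and $\lfloor q/2\rfloor$ to the two sides of $T_2(n)$---any graph $G$ on $n$ vertices with $t_2(n)$ edges satisfying $P_G(q)\geq P_{T_2(n)}(q)$ must be almost bipartite. More precisely, it admits a partition $V(G) = A\sqcup B$ with $|A|, |B| = n/2 + O(1)$ and only $o(n^2)$ edges inside the parts; graphs further from bipartite lose a factor exponential in $n$ from the chromatic count. This lets me write $G = (T_2(n)\setminus M)\cup X$, where $M$ is the set of missing cross-edges and $X$ the ``extra'' edges inside parts, with $|M|=|X|$.

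\emph{Stage 2 (reduction to a quadratic program).} For each $q$-coloring $f$ of $G$, I record the profile $(a_c, b_c)_{c\in[q]}$ with $a_c = |f^{-1}(c)\cap A|$ and $b_c = |f^{-1}(c)\cap B|$. A color $c$ with both $a_c, b_c > 0$ forces a complete bipartite subgraph of $M$ between $f^{-1}(c)\cap A$ and $f^{-1}(c)\cap B$, which is a low-probability configuration. Applying the switching/entropy-compression argument of \cite{Loh, Norine} and summing over profiles, $P_G(q)$ decomposes as a leading term---maximized when each color is confined to a single side and distributed according to the Tur\'{a}n split $(\lceil q/2\rceil, \lfloor q/2\rfloor)$---plus a correction which, up to negligible error, is a quadratic form $Q(M, X)$ in the entries of $M$ and $X$. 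The leading term matches $P_{T_2(n)}(q)$ exactly when $M=X=\emptyset$, so the theorem reduces to showing $Q(M,X)\leq 0$ with equality only in this case---equivalently, verifying that an explicit symmetric matrix whose entries depend polynomially on $q$ is negative semidefinite with one-dimensional kernel for every odd $q\geq 5$.

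The hard part will be this quadratic-form analysis in the odd-$q$ regime. When $q$ is even and divisible by $r=2$, as in Norine's case \cite{Norine}, the Tur\'{a}n color split $(q/2, q/2)$ is perfectly symmetric and the quadratic analysis is immediate. For odd $q$, the two color profiles $(\lceil q/2 \rceil, \lfloor q/2 \rfloor)$ and $(\lfloor q/2 \rfloor, \lceil q/2 \rceil)$ are nearly tied, introducing cross terms in $Q$ coupling the color imbalance with $M$ and $X$ that could in principle overwhelm the symmetric diagonal terms. A careful Lagrange-multiplier analysis, possibly combined with a case split on the magnitude of $|M|$, should isolate $T_2(n)$ as the unique maximizer for $n$ sufficiently large.
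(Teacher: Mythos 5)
Your proposal captures the high-level Loh--Pikhurko--Sudakov/Norine philosophy, but it skips the step that carries almost all the weight in the paper: actually solving the optimization problem $\OPT_q(\gamma)$ for odd $q\geq 5$ and $\gamma$ near $1/4$. In the LPS framework, $\OPT_q(\gamma)$ is a linear objective in the $2^q-1$ variables $(\alpha_A)_{\emptyset\neq A\subseteq[q]}$ subject to $\sum\alpha_A=1$, $\alpha_A\geq 0$, and a quadratic constraint $\E_q(\balpha)\geq\gamma$; its solutions determine the \emph{coarse} structure of the extremal graph via the construction $G_{\balpha}(n)$ and the stability Theorem~\ref{LPS2}. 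Your ``Stage~1 (stability)'' --- that a near-maximizer is almost bipartite --- is not something one can assert directly from the lower bound $P_{T_2(n)}(q)=\Omega((q/2)^n)$; it is precisely the output of solving $\OPT_q(\gamma)$ and showing the optimal support is $\set{A,A^c}$ with $\abs{A}=\lceil q/2\rceil$. The paper's Section~3 (Theorem~\ref{Opt5}) does this analytically through two technical lemmas (an AM-GM/double-counting bound and a ``mass concentration'' estimate on colors of size $\approx q/2$) followed by a five-case analysis over possible support structures, relying on Ma--Naves's structure theorem (Theorem~\ref{JackTheRipper2}) for the support graph. None of this appears in your outline, and without it the stability claim is unsupported.

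Your ``Stage~2'' also diverges from what the argument actually needs. The paper's local stability result (Theorem~\ref{local_theorem}) does not reduce to showing an explicit symmetric matrix is negative semidefinite. Instead, it fixes an almost-balanced partition, classifies vertices as good or bad by cross-degree, records for each coloring $f$ the sets $\R_f(1),\R_f(2)$ of ``essential colors,'' and bounds the count of colorings in two regimes: $(\abs{\R_1},\abs{\R_2})$ equal to the Tur\'an split $(\lceil q/2\rceil,\lfloor q/2\rfloor)$ or not, using the Lazebnik bound (Lemma~\ref{FLBound}) and a local swap $G\mapsto G-xy+v_1v_2$ to force completeness of the cross-bipartite graph. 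Finally, there is a third ingredient you omit entirely: the global-to-local transfer is a compactness/contradiction argument (choosing a subsequence of optimal $\balpha_i$ converging to a solution of $\OPT_q(1/4)$, then invoking Proposition~\ref{LPS1} to conclude edit-closeness to $T_2(n)$). As written, your sketch gestures at the right literature but does not identify the optimization problem that has to be solved, the structure theorem for its solutions, nor the local counting argument; the claim that ``when $q$ is even the quadratic analysis is immediate'' is also not accurate --- Norine still had to solve $\OPT$ for $\gamma=1/4$ even in the $r\mid q$ case.
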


\subsection{Notation}

All graphs in this article are finite, undirected, and have neither loops nor multiple edges. For all missing definitions and basic facts which are mentioned but not proved, we refer the reader to Bollob\'{a}s \cite{Bela}. 

For a graph $G$, let $V = V (G)$ and $E = E(G)$ denote the vertex set of $G$ and the edge set of $G$, respectively. Let $|A|$ denote the size of a set $A$. Let $e(G)=|E(G)|$ denote the number of edges of $G$. For $A \subseteq V (G)$, let $G[A]$ denote the subgraph of $G$ \textit{induced by $A$}, which means that $V(G[A]) = A$, and $E(G[A])$ consists of all edges $xy$ of $G$ with both $x$ and $y$ in $A$. For a vertex $v$ of $G$, let $d_A(v)$ denote the \textit{degree} of $v$ in $A$, the number of vertices in $A$ that are adjacent to $v$. For two disjoint subsets $A, B \subseteq V (G)$, by $G[A, B]$ we denote the bipartite subgraph of $G$ induced by $A$ and $B$, which means that $V (G[A, B]) = A \cup B$, and $E(G[A, B])$ consists of all edges of $G$ with one end-vertex in $A$ and the other in $B$. Let $k$ be a positive integer. A $k$-\textit{partition} of a set $S$ is a collection of disjoint subsets $A_1, A_2, \ldots, A_k$ (possibly empty) such that $S=A_1\cup A_2\cup \ldots \cup A_k$.

\subsection{Organization}

In section \ref{two}, the aforementioned approach from \cite{Loh} is presented, as well as two important graph constructions and powerful related results from \cite{Loh} and \cite{Ma} that will be used to prove Theorem \ref{main_theorem} are presented. In section \ref{three}, new techniques are used to solve the relevant instances of the linear optimization problem by Loh et al. In section \ref{four}, an approximate version of Theorem \ref{main_theorem} is proved.  The main result, Theorem \ref{main_theorem}, is derived in section \ref{five}, using the results of the previous sections. Finally, several open problems for future investigation are mentioned in section \ref{six}. 


\section{The linear optimization problem and associated graph constructions}\label{two}

In their breakthrough paper, Loh, Pikhurko, and Sudakov \cite{Loh} developed the optimization problem \textbf{OPT} for future researchers to use to determine the graphs that maximize the number of $q$-colorings among all graphs with the same numbers of vertices and edges. They remark that ``the remaining challenge is to find analytic arguments which solve the optimization problem for general $q$." We will solve particular cases of the optimization problem in Section \ref{four}. 

\subsection{The linear optimization problem by Loh, Pikhurko, and Sudakov}\label{two_one}

It is shown in \cite{Loh} that solving the problem proposed by Linial \cite{Linial} and Wilf \cite{Wilf1, Wilf3, Wilf2} for large $n$ reduces to a quadratically constrained linear program, which we now define. 

Let $\mathbb{R}$ denote the set of real numbers. Let $\balpha=(\alpha_A)_{A\subseteq [q], A\neq \emptyset}$ be a vector with $2^q-1$ components $\alpha_A \in \mathbb{R}$ that are indexed by the nonempty subsets $A \subseteq [q]$. When all components of $\balpha$ are nonnegative we will write $\balpha \geq 0$. The logarithms below and in the rest of the paper are natural.

Fix an integer $q\geq 2$ and a real number $\gamma$ that satisfies $0<\gamma \leq (q-1)/(2q)$. Consider the following functions of $\balpha$:
$$  \OBJ_q(\balpha):= \sum_{A\neq \emptyset} \alpha_A \log |A|; \quad \V_q(\balpha):=\sum_{A\neq \emptyset} \alpha_A; \quad \E_q(\balpha):= \sum_{\substack{\{A,B\}: A\cap B = \emptyset\\ A\neq \emptyset, B\neq \emptyset}} \alpha_A\alpha_B.   $$
The sums in $\OBJ_q(\balpha)$ and $V_q(\balpha)$ run over all nonempty subsets of $[q]$, and the sum in $\E_q(\balpha)$ runs over all \textit{unordered} pairs of disjoint nonempty subsets of $[q]$. In the remainder of this paper, we will suppress mentioning that the sets over which the sums above are taken are nonempty. 

Let $\FEAS_q(\gamma)$ be defined by 
$$ \FEAS_q(\gamma):=\{\balpha \in \mathbb{R}^{2^q-1}: \balpha \geq 0, \, \V_q(\balpha) =1, \, \text{and} \, \E_q(\balpha) \geq \gamma\} .$$
The elements of $\FEAS_q(\gamma)$ will be referred to as \textit{feasible vectors}. Our goal is to maximize $\OBJ_q(\balpha)$ over $\FEAS_q(\gamma)$.

\begin{prob2} \label{OptProb}
Find
$$ \OPT_q(\gamma):= \displaystyle \max_{\balpha \in \FEAS_q(\gamma)} \OBJ_q(\balpha). $$
\end{prob2}
As noted in \cite{Loh}, a solution of \textbf{OPT} exists by continuity of $\OPT_q(\gamma)$ and by compactness of the set $\FEAS_q(\gamma)$. We say that $\balpha$ \textit{solves} $\OPT_q(\gamma)$ (or just \textbf{OPT}) if $\balpha \in \FEAS_q(\gamma)$ and $\OBJ_q(\balpha)= \OPT_q(\gamma)$. 

Our objective in solving \textbf{OPT} is to obtain the approximate structure of a graph on $n$ vertices and at least $\gamma n^2$ edges that has the most number of $q$-colorings, provided that $n$ is sufficiently large. We define such a graph in the next section.

Loh et al. \cite{Loh} had solved \textbf{OPT} in for all $q\geq 3$ when $\gamma$ satisfies $0 \leq \gamma \leq \kappa_q$, where
$$
\kappa_q:=\paren{\sqrt{\frac{\log(q/(q-1))}{\log(q)}}+\sqrt{\frac{\log(q)}{\log(q/(q-1))}}}^{-2}\approx \frac{1}{q\log(q)}.
$$

Norine \cite{Norine} presented an argument that solves \textbf{OPT} when $\gamma = (r-1)/(2r)$, where $r$ is the number of parts in the Tur\'{an} graph, $T_r(n)$, and $q$ is divisible by $r$. In particular, Norine completely solved \textbf{OPT} for $\gamma=1/4$ (that is, $r=2$) and all even integers $q\geq 2$. In Section \ref{four} we extend his solution to all $\gamma$ within a closed interval of real numbers $[1/4 -\epsilon, 1/4]$ when $\epsilon>0$ is sufficiently small and $q\geq 5$.

\subsection{Graph constructions based on feasible vectors}\label{two_two}

The following is a graph construction from \cite{Loh} that is based on an arbitrary feasible vector $\balpha$. Note that this construction may not result in a unique graph.

\begin{const1}\label{Cons1} 
Let $\gamma$ be a real number, $n\geq 1$, and $\balpha \in \FEAS_q(\gamma)$. The $n$-vertex graph $G_{\balpha}(n)$ is constructed by partitioning the $n$ vertices into disjoint sets $V_A$, indexed by the non-empty subsets $A \subseteq [q]$, such that each $|V_A|$ differs from $n\alpha_A$ by less than $1$. For every pair of non-empty $A, B \subseteq [q]$ with $A\cap B =\emptyset$ join every vertex in $V_A$ to every vertex of $V_B$ by an edge.
\end{const1}


If all $n\alpha_A$ happened to be integers, the graph $G_{\balpha}(n)$ would be unique and $G_{\balpha}(n)$ would have precisely $\E_q(\balpha)n^2$ edges and at least $$\prod_{A\neq \emptyset} |A|^{n\alpha_A}=e^{\OBJ_q(\balpha)n}$$ $q$-colorings, since any coloring in which the vertices in $V_A$ are only colored with colors in $A$ results in a $q$-coloring of $G_{\balpha}(n)$. However, if this is not the case, the choice in size of each $V_A$ may result in a graph with fewer than $\gamma n^2$ edges (recall that $\E_q(\balpha)n^2\geq \gamma n^2$). Fortunately, we may use the Proposition \ref{LPS1} below in order to obtain another graph on $n$ vertices which is ``close" (to be defined next) to $G_{\balpha}(n)$, and which has at least $\gamma n^2$ edges. 

Given graphs $G$ and $H$ with the same set of vertices, their \textit{edit distance} is the minimum number of edges that need to be added or deleted from one graph to obtain a graph isomorphic to the other. 

\begin{proposition}[\cite{Loh}]\label{LPS1}
For any feasible vector $\balpha$, the number of edges in any graph $G_{\balpha}(n)$ differs from $\E_q(\balpha)n^2$ by less than $2^qn$. Also, for any other feasible vector $\bnu$, the edit distance between $G_{\balpha}(n)$ and $G_{\bnu}(n)$ is at most $\norm{\balpha - \bnu}_1 n^2+2^{q+1}n$, where $\norm{\cdot}_1$ is the $L^1$-norm on $\mathbb{R}^{2^q-1}$.
\end{proposition}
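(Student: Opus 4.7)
The plan is to handle both statements by direct calculation, using the one piece of information that Construction \ref{Cons1} gives us: each part satisfies $|V_A| = n\alpha_A + \delta_A$ with $|\delta_A|<1$.

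For the first bound, I would write twice the edge count as
$$ 2e(G_{\balpha}(n)) \;=\; \sum_{A,B:\,A\cap B=\emptyset} |V_A|\,|V_B|, $$
where the sum is over ordered pairs of disjoint nonempty subsets of $[q]$. After substituting $|V_A|=n\alpha_A+\delta_A$, the right-hand side expands as $2n^2\E_q(\balpha)$, plus cross terms of the form $n\alpha_A\delta_B$ (and symmetrically $n\alpha_B\delta_A$), plus a purely-error quadratic tail in $\delta_A\delta_B$. Using $\sum_A\alpha_A=\V_q(\balpha)=1$, the bound $|\delta_A|<1$, and the crude count of $2^q-1$ nonempty subsets, the cross terms contribute at most on the order of $2^q n$ in absolute value, while the quadratic tail is an $n$-independent constant. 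Halving and absorbing the constant tail (legitimate for $n$ large) yields $|e(G_\balpha(n))-\E_q(\balpha)n^2|<2^q n$.

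For the edit distance, the plan is to identify the two vertex sets via a bijection and count the ``mislabeled'' vertices. Writing $V_A^\balpha$ and $V_A^\bnu$ for the parts of $G_\balpha(n)$ and $G_\bnu(n)$, the bijection is chosen so as to place $\min(|V_A^\balpha|,|V_A^\bnu|)$ vertices in a common block labeled by $A$ for every $A$, leaving
$$ m \;:=\; \sum_A \max\bigl(0,\,|V_A^\balpha|-|V_A^\bnu|\bigr) \;=\; \tfrac{1}{2}\sum_A \bigl||V_A^\balpha|-|V_A^\bnu|\bigr| $$
vertices carrying different labels in the two graphs. Since $||V_A^\balpha|-n\alpha_A|<1$ and similarly for $\bnu$, one obtains $m < \tfrac{1}{2}\paren{n\norm{\balpha-\bnu}_1 + 2(2^q-1)}$. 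Any edge whose endpoints are both matched is present in $G_\balpha(n)$ if and only if it is present in $G_\bnu(n)$, so the symmetric difference of the two edge sets is bounded by the edges with at least one mismatched endpoint in each graph, namely at most $2m(n-1) < n^2\norm{\balpha-\bnu}_1+2^{q+1}n$.

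The only real obstacle is bookkeeping: one has to check that the $O(2^q)$ and $O(2^{2q})$ constants produced by expanding the $\delta_A$'s genuinely fit inside the stated linear-in-$n$ thresholds $2^q n$ and $2^{q+1} n$, rather than merely inside $O_q(n)$. Since Proposition \ref{LPS1} is only ever invoked for $n$ large, this is essentially automatic; the underlying arguments amount to nothing more than the triangle inequality together with the near-integrality of the part sizes guaranteed by Construction \ref{Cons1}.
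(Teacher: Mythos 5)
The paper does not prove this proposition; it is quoted verbatim from Loh, Pikhurko, and Sudakov \cite{Loh}, so there is no in-paper argument to compare against. Taking your sketch on its own terms: the edit-distance argument is sound. Matching $\min\bigl(|V_A^\balpha|,|V_A^\bnu|\bigr)$ vertices to each label $A$, counting $m = \tfrac12\sum_A\bigl||V_A^\balpha|-|V_A^\bnu|\bigr| < \tfrac12\bigl(n\norm{\balpha-\bnu}_1 + 2(2^q-1)\bigr)$ mismatched vertices, and observing that only pairs with a mismatched endpoint can lie in the edge symmetric difference is exactly right. (Your $2m(n-1)$ is a factor-of-two overcount --- the same set of pairs controls both graphs, so $m(n-1)$ suffices --- but the slack is harmlessly absorbed.)

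The edge-count bound, however, has a genuine gap. Fully expanding $(n\alpha_A+\delta_A)(n\alpha_B+\delta_B)$ leaves the tail $\sum_{\{A,B\}}\delta_A\delta_B$, which is $n$-independent and of order $2^{2q}$; you then need $n \gtrsim (2^q-1)^2/2$ to tuck it under $2^qn$, and you openly lean on ``$n$ large''. But the proposition, as stated, claims the bound for every $n$, and the remedy is not a matter of tighter bookkeeping but of a different decomposition. Write $|V_A||V_B|-n^2\alpha_A\alpha_B = \delta_A|V_B| + n\alpha_A\delta_B$, so that $\bigl||V_A||V_B|-n^2\alpha_A\alpha_B\bigr| < |V_B| + n\alpha_A$ using only $|\delta_A|,|\delta_B|<1$. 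Summing over ordered disjoint pairs, each nonempty $B$ is disjoint from fewer than $2^{q-1}$ nonempty sets $A$, so $\sum_{(A,B)}|V_B| < 2^{q-1}\sum_B|V_B| = 2^{q-1}n$ and likewise $\sum_{(A,B)}n\alpha_A < 2^{q-1}n$ via $\V_q(\balpha)=1$. Halving to return to unordered pairs gives $\bigl|e(G_\balpha(n)) - \E_q(\balpha)n^2\bigr| < 2^{q-1}n < 2^q n$ unconditionally. The essential point is to pair each $\delta$ with a factor that sums to a fixed quantity ($n$ or $1$) rather than with another $\delta$.
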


Proposition \ref{LPS1} and Theorem \ref{LPS2} (shown below) will be important in the proof of our main result, Theorem \ref{main_theorem}, in Section \ref{six}.

\begin{theorem}[\cite{Loh}] \label{LPS2}
For any $\delta, \kappa >0$, the following holds for all sufficiently large $n$. Let $G$ be an $n$-vertex graph with $m$ edges, where $m \leq \kappa n^2$, which has at least as many $q$-colorings as any other graph with the same number of vertices and edges. Then $G$ is $\epsilon n^2$-close to a graph $G_{\balpha}(n)$ for some feasible vector $\balpha$ which solves $\OPT_q(\gamma)$ for some $\gamma$, where $|\gamma-m/n^2|<\epsilon$ and $\gamma \leq \kappa$.
\end{theorem}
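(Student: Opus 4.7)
The strategy mirrors the structural-stability blueprint of Loh--Pikhurko--Sudakov for the Linial--Wilf problem: pair a construction-based lower bound on $P_G(q)$ with an (almost) matching upper bound expressed through \textbf{OPT}, then argue that the extremal graph must resemble the construction realizing the lower bound.

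For the \emph{lower bound}, set $\gamma^* = m/n^2 \le \kappa$ and let $\balpha^*$ attain $\OPT_q(\gamma^*)$. Build $G_{\balpha^*}(n)$; by Proposition \ref{LPS1} it has $\gamma^* n^2 \pm 2^q n$ edges, and a perturbation shifting $O(n)$ vertices among the parts $V_A$ and editing $O(n)$ edges yields a graph $G^*$ with exactly $m$ edges. Since every assignment satisfying $f(v) \in A$ for each $v \in V_A$ is a proper $q$-coloring of $G^*$,
$$ P_G(q) \;\ge\; P_{G^*}(q) \;\ge\; \prod_{A\neq\emptyset} |A|^{|V_A|-O(1)} \;=\; \exp\bigl(\OPT_q(\gamma^*)\,n - O(\log n)\bigr). $$

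For the \emph{upper bound}, I would apply Szemer\'edi regularity with parameters $\eta \ll \epsilon$ to any $n$-vertex graph $H$ with $m$ edges, producing a $T$-class equitable partition whose reduced graph captures $m/n^2$ up to $o(1)$. Via a (delicate) entropy chain-rule / counting argument -- summing $\log|A|$ over $v \in V_A$ after conditioning on an auxiliary palette partition $(V_A)_{A\subseteq [q]}$ induced by a uniformly random $q$-coloring $f$ of $H$ -- one obtains
$$ \log P_H(q) \;\le\; n \sum_{A\neq\emptyset}\tfrac{|V_A|}{n}\log|A| + o(n) \;=\; n\,\OBJ_q(\balpha) + o(n), $$
where $\balpha := (|V_A|/n)_A$. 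A separate edge count against the partition, combined with regularity, shows $\V_q(\balpha)=1$ and $\E_q(\balpha) \ge m/n^2 - o(1)$, so $\balpha \in \FEAS_q(m/n^2 - o(1))$ and consequently $\log P_H(q) \le \OPT_q(m/n^2)\,n + o(n)$.

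Applying the upper bound to the extremal $G$ and comparing with the lower bound forces $\OBJ_q(\balpha)$ within $o(1)$ of $\OPT_q(m/n^2)$. By compactness of $\FEAS_q(\cdot)$ and continuity of $\OBJ_q$, the induced $\balpha$ is $o(1)$-close in $L^1$ to some optimizer $\bnu$ of $\OPT_q(\gamma)$ with $\gamma = m/n^2 + o(1)$. It remains to show $G$ itself is $\epsilon n^2$-close to $G_{\bnu}(n)$, not merely that its palette vector is close. This \emph{stability} step is the main obstacle: one must argue that any $\Omega(n^2)$ deviation from the $G_{\bnu}(n)$ adjacency pattern (either missing edges between disjoint-palette parts or extra edges between overlapping-palette parts) strictly decreases $P_G(q)$ by a factor of $\exp(\Omega(n))$, overwhelming the $o(n)$ slack from regularity and contradicting the extremality of $G$. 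Proposition \ref{LPS1} then converts the $L^1$ closeness of $\balpha$ and $\bnu$ into an $\epsilon n^2$ edit-distance bound to a canonical $G_{\bnu}(n)$, completing the theorem.
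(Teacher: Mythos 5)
The paper does not prove Theorem \ref{LPS2}; it is imported verbatim from Loh, Pikhurko, and Sudakov \cite{Loh} and used as a black box (note the attribution in the theorem header and the remark immediately after Proposition \ref{LPS1} that it ``will be important in the proof of our main result''). There is therefore no proof in the present manuscript to compare your attempt against.

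That said, your outline does capture the broad strategy of the original LPS argument: a construction-based lower bound $P_G(q) \ge \exp(\OPT_q(m/n^2)\,n - o(n))$, matched against a Szemer\'edi-regularity upper bound $\log P_H(q) \le \OPT_q(m/n^2)\,n + o(n)$, followed by a stability step that upgrades ``the palette-fraction vector $\balpha$ is near an optimizer'' to ``$G$ is $\epsilon n^2$-close in edit distance to $G_{\bnu}(n)$.'' But the two steps you compress into single sentences are precisely where all the work lives. The upper bound $\log P_H(q) \le n\,\OBJ_q(\balpha) + o(n)$ is not a routine entropy chain rule: a uniformly random proper coloring does not induce a clean product structure over an arbitrary partition $(V_A)$, and the whole point of the LPS machinery (their ``polynomially bounded'' clusters, the passage to a reduced colored multigraph, and careful control of irregular pairs and low-density pairs) is to make such a bound rigorous. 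And the stability step you explicitly defer --- showing that $\Omega(n^2)$ structural deviation from $G_{\bnu}(n)$ costs a factor $\exp(\Omega(n))$ in colorings, enough to dominate the $o(n)$ slack --- is the technical heart of their paper and is not implied by compactness and continuity alone (those only give $L^1$-closeness of the \emph{vectors}, which you correctly note is weaker than edit-distance closeness of the \emph{graphs}). As written, the proposal is an accurate table of contents for the LPS proof rather than a proof; it would need both of these steps carried out in full to be a self-contained argument for Theorem \ref{LPS2}.
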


The \textit{support} of a feasible vector $\balpha=(\alpha_A)_{A\subseteq [q], A \neq \emptyset}$, denoted by $\supp_q(\balpha)$, is the collection of sets $A\subseteq [q]$ such that $\alpha_A > 0$. The graph construction below is from \cite{Ma}.
\begin{const2}
Let $\balpha \in \FEAS_q(\gamma)$. The set of vertices of the graph $\SUPP_q(\balpha)$ is $\supp_q(\balpha)$ and the edge set is formed by connecting pairs of disjoint sets. 
\end{const2}
The graph $\SUPP_q(\balpha)$ is called the \textit{support graph} of a feasible vector $\balpha$. We define two classes of graphs $\SUPP_q(\balpha)$ as follows. Let $\P_k$ be the class of all graphs $\SUPP_q(\balpha)$ for which $\supp_q(\balpha)$ forms a $k$-partition $A_1, A_2, \ldots, A_k$ of $[q]$, with only nonempty sets. Let $\Q_k$ be the class of all graphs $\SUPP_q(\balpha)$ for which $\supp_q(\balpha)$ consists of a $k$-partition $A_1, A_2, \ldots, A_k$ of $[q]$, with only nonempty sets, together with the set $A_1\cup A_2$. 

Fix a positive integer $k$ and let $\balpha$ be a vector such that the graph $\SUPP_q(\balpha)$ is in $\P_k$ or $\Q_k$. Consider a restricted version of \textbf{OPT} for such vectors $\balpha$:
\begin{prob3} \label{OptProb2}
Maximize
$$ 
\displaystyle \sum_{i=1}^k \alpha_{A_i} \log(|A_i|)+\alpha_{A_1\cup A_2} \log(|A_1|+|A_2|), 
$$
subject to
\[
\sum_{i=1}^k \alpha_{A_i} +\alpha_{A_1\cup A_2}=1, \quad \sum_{i=1}^k \alpha_{A_i}^2 +\alpha_{A_1\cup A_2}^2+2\alpha_{A_1\cup A_2}(\alpha_{A_1}+\alpha_{A_2})\leq 1-2\gamma,
\]
and
\[
\alpha_{A_i}\geq 0, \alpha_{A_1\cup A_2}\geq 0, \text{ for } i=1, \ldots, k.
\]
\end{prob3}
The conditions of \textbf{OPT 2} are consistent with the conditions of \textbf{OPT}, when restricted to vectors with support in $\Q_k \cup \P_k$. The following observation was made in (\cite{Ma}) using the Cauchy-Schwarz inequality.

\begin{obs}\label{obs}
We have $k\geq 1/\lceil 1-2\gamma \rceil$ in \textbf{OPT 2}, with a strict inequality holding if $\alpha_{A_1}+\alpha_{A_2}>0$ and $\alpha_{A_1\cup A_2}>0$.
\end{obs}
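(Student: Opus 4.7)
Plan. The key is to apply the Cauchy--Schwarz inequality after a regrouping that absorbs the cross term $2\alpha_{A_1\cup A_2}(\alpha_{A_1}+\alpha_{A_2})$. Setting $x_i := \alpha_{A_i}$ for $i=1,\ldots,k$ and $y := \alpha_{A_1\cup A_2}$, the constraints of \textbf{OPT 2} become
\begin{equation*}
\sum_{i=1}^k x_i + y = 1, \qquad \sum_{i=1}^k x_i^2 + y^2 + 2y(x_1+x_2) \leq 1-2\gamma, \qquad x_i, y \geq 0.
\end{equation*}

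The algebraic identity $(x_1+y)^2+(x_2+y)^2 = x_1^2+x_2^2+2y(x_1+x_2)+2y^2$ lets me rewrite the quadratic constraint as
\begin{equation*}
(x_1+y)^2 + (x_2+y)^2 + \sum_{i=3}^{k} x_i^2 \;\leq\; 1-2\gamma+y^2.
\end{equation*}
Since the $k$-tuple $(x_1+y,\,x_2+y,\,x_3,\ldots,x_k)$ has coordinate sum $\sum_{i=1}^k x_i + 2y = 1+y$, Cauchy--Schwarz gives
\begin{equation*}
(1+y)^2 \;\leq\; k\bigl((x_1+y)^2+(x_2+y)^2+\textstyle\sum_{i=3}^k x_i^2\bigr) \;\leq\; k(1-2\gamma+y^2),
\end{equation*}
so $k \geq (1+y)^2/(1-2\gamma+y^2)$. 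A one-line algebraic check shows that for $0\leq y\leq 1$ and $0<\gamma\leq 1/4$ the bound $(1+y)^2/(1-2\gamma+y^2)\geq 1/(1-2\gamma)$ holds, with equality if and only if $y=0$: the inequality rearranges to $2y(1-2\gamma-\gamma y)\geq 0$, which holds because $(1-2\gamma)/\gamma\geq 2\geq y$ in the relevant range. Since $k$ is a positive integer, this gives the desired lower bound $k\geq \lceil 1/(1-2\gamma)\rceil$.

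For the strict inequality, observe that $(1+y)^2/(1-2\gamma+y^2) > 1/(1-2\gamma)$ as soon as $y>0$, so the hypothesis $\alpha_{A_1\cup A_2}>0$ alone already forces $k > 1/(1-2\gamma)$; the additional hypothesis $\alpha_{A_1}+\alpha_{A_2}>0$ in the observation is extra information that will be exploited elsewhere in the paper. I do not anticipate a serious obstacle here — the entire argument reduces to a single, well-chosen Cauchy--Schwarz application after the correct change of variables, followed by an elementary algebraic verification.
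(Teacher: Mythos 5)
The paper does not prove this observation itself: it is stated with the attribution ``The following observation was made in (\cite{Ma}) using the Cauchy--Schwarz inequality,'' so there is no in-paper argument to compare against. Your proof is correct and, as far as one can tell, in the same spirit as the cited source, since the only stated ingredient is Cauchy--Schwarz. The regrouping $(x_1+y)^2+(x_2+y)^2+\sum_{i\ge 3}x_i^2 \le 1-2\gamma+y^2$ applied to the $k$-tuple $(x_1+y,\,x_2+y,\,x_3,\ldots,x_k)$ with coordinate sum $1+y$ is exactly the right way to absorb the cross term $2y(x_1+x_2)$, and the elementary reduction to $2y\bigl(1-2\gamma-\gamma y\bigr)\ge 0$ cleanly delivers both the weak bound and the strict bound when $y>0$. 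Note that the intended reading of the Observation (there is a visible typo: $1/\lceil 1-2\gamma\rceil$ should be $\lceil 1/(1-2\gamma)\rceil$) is that the \emph{unceiled} inequality $k\ge 1/(1-2\gamma)$ becomes strict; you cannot hope to improve it to $k>\lceil 1/(1-2\gamma)\rceil$, since e.g.\ $\gamma$ slightly below $1/4$, $k=2$, $x_1=x_2$, and $y$ small gives a feasible point with $y>0$ and $k=\lceil 1/(1-2\gamma)\rceil$.

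Two small caveats. First, your verification of $y\le(1-2\gamma)/\gamma$ uses $\gamma\le 1/4$, whereas \textbf{OPT 2} is posed for all $0<\gamma\le(q-1)/(2q)$, which can exceed $\sqrt2-1\approx 0.414$; beyond that point $(1-2\gamma)/\gamma<\sqrt{1-2\gamma}$ and the inequality $(1+y)^2/(1-2\gamma+y^2)\ge 1/(1-2\gamma)$ is no longer automatic, so to cover the full stated range one would supplement with the complementary Cauchy--Schwarz on $(x_1,\ldots,x_k)$ alone, which handles the regime $y\ge(1-2\gamma)/(1-\gamma)$. In this paper $\gamma$ is always near $1/4$, so your restriction is harmless, but it is worth flagging. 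Second, your remark that $\alpha_{A_1}+\alpha_{A_2}>0$ is ``extra information'' is technically accurate within the algebraic formulation of \textbf{OPT~2}, but that hypothesis together with $\alpha_{A_1\cup A_2}>0$ is precisely what makes the cross term strictly positive and what distinguishes a genuine $\Q_k$-support (where $|\supp_q(\balpha)|=k+1$) from a degenerate one that collapses to a $\P_{k-1}$-partition, so the paper is not being redundant by including it.
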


Define 
$$
\P=\bigcup_{\lceil \frac{1}{1-2\gamma} \rceil \leq k \leq q} \P_k.
$$
The next theorem determines the structure of the graph $\SUPP_q(\balpha)$ when $\balpha$ solves \textbf{OPT}. 

\begin{theorem}[\cite{Ma}] \label{JackTheRipper2}
For an integer $q$ and real $\gamma$ that satisfy $0< \gamma \leq (q-1)/(2q)$, all solutions to \textbf{OPT} are such that $\SUPP_q(\balpha)$ is in either $\P$ or $\Q_{\lceil 1/(1-2\gamma) \rceil}$. When $\gamma < (q-1)/(2q)$, we have $\SUPP_q(\balpha) \notin \P_q$.
\end{theorem}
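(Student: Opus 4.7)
The plan is to combine the Karush--Kuhn--Tucker (KKT) optimality conditions for any solution $\balpha$ of $\OPT_q(\gamma)$ with explicit perturbations within $\FEAS_q(\gamma)$ to force $\SUPP_q(\balpha)$ to have the claimed structure, and then to invoke Observation \ref{obs} to pin down the range of $k$. Writing $S := \supp_q(\balpha)$ and attaching Lagrange multipliers $\lambda \in \mathbb{R}$ to $\V_q(\balpha)=1$, $\mu \geq 0$ to $\E_q(\balpha) \geq \gamma$, and $\nu_A \geq 0$ to each $\alpha_A \geq 0$, stationarity yields
$$
\log|A|+\mu\, N(A)=\lambda \text{ for } A\in S, \qquad \log|A|+\mu\, N(A)\leq \lambda \text{ for } A\notin S,
$$
where $N(A) := \sum_{B\cap A=\emptyset}\alpha_B$. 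If $\mu=0$ then $|A|$ is constant on $S$ and $\E_q(\balpha)=0$, which is optimal only at the degenerate boundary $\gamma=0$; so I may assume $\mu>0$ and $\E_q(\balpha)=\gamma$.

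The combinatorial heart of the argument is to rule out ``bad'' supports using perturbations $v$ with $\sum_A v_A = 0$. First, if some color $c\in[q]$ lies in no $A\in S$, the move $v = e_{A\cup\{c\}}-e_A$ strictly raises $\OBJ_q$ and, because no set in $S$ contains $c$, leaves $\E_q$ unchanged to both first and second order, contradicting optimality of $\balpha$; hence $\bigcup_{A\in S} A = [q]$. Next, if $A,B\in S$ overlap properly (with $A \cap B$ a proper nonempty subset of both), the uncrossing direction $v=-e_A-e_B+e_{A\cap B}+e_{A\cup B}$ gives $\Delta\OBJ_q<0$ by the strict log-submodularity $|A\cap B|\cdot|A\cup B|<|A|\cdot|B|$ and $\Delta\E_q\geq 0$ by submodularity of $N(\cdot)$. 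Coupling $-v$ with a compensating transfer $w=e_{C_2}-e_{C_1}$ between two sets of different sizes in $S$, for which KKT gives $\Delta\OBJ_q(w)/\Delta\E_q(w) = -\mu$, yields a feasible direction strictly increasing $\OBJ_q$; the degenerate corners where $A\cap B$ and $A\cup B$ both realize equality in the KKT inequality for $A\notin S$ are dispatched by a dedicated swap involving those sets. A parallel perturbation rules out strict nestings $A \subsetneq B$ in $S$ that are not unions of disjoint sets in $S$. The net conclusion is that $S$ consists of a partition $A_1,\ldots,A_k$ of $[q]$ together with at most one extra set of the form $A_i\cup A_j$; uniqueness of that extra set follows because the scalar equation $\log|U|+\mu N(U) = \lambda$ admits at most one solution $U$ among non-partition subsets for fixed $\mu, \lambda$.

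Observation \ref{obs} then forces $k\geq \lceil 1/(1-2\gamma)\rceil$, with equality required in the $\Q_k$ case because $\alpha_{A_1}+\alpha_{A_2}>0$ and $\alpha_{A_1\cup A_2}>0$. For the final statement, if $\SUPP_q(\balpha)\in\P_q$ then every part of the partition is a singleton and so $\OBJ_q(\balpha)=0$; when $\gamma<(q-1)/(2q)$ this is strictly dominated by a feasible vector supported on a partition of $[q]$ into $q-1$ parts with one doubleton (which has strictly positive objective and, because $\gamma<(q-1)/(2q)$, admits weights satisfying the $\E_q$ constraint), contradicting optimality. The main obstacle will be the overlap-elimination step: designing the compensating direction $w$ cleanly in every case and verifying strict improvement of $\OBJ_q$ in the degenerate corner configurations where many of the KKT inequalities hold with equality.
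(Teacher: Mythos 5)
This theorem is not proved in the paper: it is stated with the attribution \cite{Ma} and used as a black box, so there is no internal proof to compare your attempt against. Evaluating the proposal on its own merits, the KKT framework is the right starting point and broadly matches the perturbative philosophy of Loh--Pikhurko--Sudakov and Ma--Naves, but several steps are not yet sound. The claim that $\mu=0$ forces $\E_q(\balpha)=0$ is a non sequitur (two disjoint sets of equal size in the support give $\E_q>0$ with $\mu=0$); the correct argument is that if the edge constraint is inactive at a global maximizer, then $\balpha$ also maximizes $\OBJ_q$ over the simplex alone, which forces $\alpha_{[q]}=1$ and $\E_q=0<\gamma$, a contradiction. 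The overlap-elimination step is the crux and, as you acknowledge, is not carried out: the direction $-v=e_A+e_B-e_{A\cap B}-e_{A\cup B}$ is not a feasible direction when $\alpha_{A\cap B}=\alpha_{A\cup B}=0$ (which is exactly the situation after uncrossing), so one cannot simply move along $-v$ and compensate; one must instead show directly that moving mass from $A$ and $B$ onto $A\cap B$ and $A\cup B$ is forbidden by optimality, handling the case $A\cap B\notin S$, $A\cup B\notin S$ separately and tracking the second-order change in $\E_q$. Finally, the uniqueness claim for the extra set $A_i\cup A_j$ is wrong as stated: the scalar equation $\log|U|+\mu N(U)=\lambda$ can be satisfied by several distinct non-partition subsets simultaneously (e.g.\ $A_1\cup A_2$ and $A_1\cup A_3$ when $|A_2|=|A_3|$ and $\alpha_{A_2}=\alpha_{A_3}$), so excluding a second extra set requires a genuine perturbation argument, not a counting of solutions to the stationarity equation. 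In short, the skeleton is plausible but the two load-bearing steps (overlap elimination and uniqueness of the extra set) remain gaps; since the paper itself defers to \cite{Ma}, you should either cite Ma--Naves directly or supply full details for those two steps.
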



\section{Relevant solutions of \textbf{OPT}}\label{three}

In this section we use new methods to provide an analytic solution to \textbf{OPT} for odd $q\geq 5$ for all $\gamma$ sufficiently close to $1/4$. Although a computer solution can be found for fixed $\gamma$, an analytic solution is necessary to solve \textbf{OPT} for all $\gamma$ within a closed interval of real numbers $[1/4 -\epsilon, 1/4]$ for small $\epsilon >0$. 

The main result of this section is the following theorem. For any set $S$, we denote its \textit{complement} by $S^c$.

\begin{theorem} \label{Opt5}
The following holds for all $\gamma$ sufficiently close to $1/4$. Any solution $\balpha$ of {\rm\textbf{OPT}} for odd $q\geq 5$ has $\supp_q(\balpha)=\{A, A^c\},$ where $|A|=\lceil q/2\rceil$, and
$$ \alpha_{A}=\frac{1+\sqrt{1-4\gamma}}{2} \quad \text{and} \quad \alpha_{A^c}=1-\alpha_{A},$$
which gives
$$
\OPT_q(\gamma)=\frac{1}{2}\log\paren{\lceil q/2 \rceil \cdot \lfloor q/2 \rfloor}+\frac{\sqrt{1-4\gamma}}{2}\log\paren{\frac{\lceil q/2 \rceil}{\lfloor q/2 \rfloor}}.
$$
\end{theorem}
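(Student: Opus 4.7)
The plan is to use Theorem \ref{JackTheRipper2} to restrict the possible supports of any solution of $\OPT_q(\gamma)$, and then solve the restricted OPT 2 problem in each case. For $\gamma$ sufficiently close to $1/4$ with $\gamma \leq 1/4$, $\lceil 1/(1-2\gamma) \rceil = 2$, and for odd $q \geq 5$ the inequality $\gamma < (q-1)/(2q)$ holds automatically. Theorem \ref{JackTheRipper2} then forces $\SUPP_q(\balpha)$ to lie in $\P_k$ for some $2 \leq k \leq q-1$, or in $\Q_2$. It thus suffices to solve OPT 2 in each of these three types and compare the optima.

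For the $\P_2$ case, after fixing a 2-partition $\{A_1, A_2\}$ of $[q]$ with $|A_1| = a \geq \lceil q/2 \rceil$ (by symmetry), the constraint $\alpha_{A_1}(1 - \alpha_{A_1}) \geq \gamma$ confines $\alpha_{A_1}$ to the interval $[(1 - \sqrt{1-4\gamma})/2,\, (1 + \sqrt{1-4\gamma})/2]$, and linearity of the objective selects the upper endpoint, giving $g(a) := \frac{1+\sqrt{1-4\gamma}}{2}\log a + \frac{1-\sqrt{1-4\gamma}}{2}\log(q-a)$. The function $g$ is strictly concave in continuous $a$, with maximum at $a^* = q(1+\sqrt{1-4\gamma})/2$; for $\gamma$ close enough to $1/4$ (namely $\sqrt{1-4\gamma} < 1/q$), $a^* \leq \lceil q/2 \rceil$, so the integer maximum over $a \in \{\lceil q/2 \rceil, \ldots, q-1\}$ is at $a = \lceil q/2 \rceil$, producing the stated formula.

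For the $\Q_2$ case, with support $\{A_1, A_2, [q]\}$ and extra variable $z = \alpha_{[q]}$, the edge constraint becomes $\alpha_{A_1}\alpha_{A_2} \geq \gamma$ (only $A_1, A_2$ are disjoint in the support). Setting $s = \alpha_{A_1} + \alpha_{A_2}$ and using the binding $\alpha_{A_1}\alpha_{A_2} = \gamma$ yields the objective
\[
f(s, a) = \frac{s}{2}\log(a(q-a)) + \frac{\sqrt{s^2 - 4\gamma}}{2}\log\frac{a}{q-a} + (1-s)\log q
\]
on $s \in [2\sqrt{\gamma}, 1]$. Computing $\partial_s f\big|_{s=1} = \frac{1}{2}\log\frac{a(q-a)}{q^2} + \frac{1}{2\sqrt{1-4\gamma}}\log\frac{a}{q-a}$, the second term diverges to $+\infty$ as $\gamma \to 1/4^-$ (for $a > q-a$), so $\partial_s f\big|_{s=1} > 0$ for $\gamma$ close enough to $1/4$; moreover $\partial_s f$ is monotonically decreasing in $s$ because $s/\sqrt{s^2 - 4\gamma}$ is. Hence $f$ is strictly increasing on $[2\sqrt{\gamma}, 1]$, so the $\Q_2$ maximum occurs at $s = 1$, $z = 0$, which reduces to $\P_2$.

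The main obstacle is the case $\P_k$ for $k \geq 3$. Lagrange multipliers with the binding quadratic constraint $\sum_i \alpha_i^2 = 1 - 2\gamma$ yield the closed-form optimum value $\bar{\ell} + \sigma \sqrt{k(1-2\gamma) - 1}$, where $\bar{\ell}$ and $\sigma$ are the mean and standard deviation of $\{\log |A_i|\}_{i=1}^k$. I would prove this is strictly less than the $\P_2$ optimum for every $k \in \{3, \ldots, q-1\}$ and every integer partition of $q$ into $k$ positive parts; by continuity of $\OPT_q(\gamma)$ in $\gamma$, it suffices to verify the comparison at $\gamma = 1/4$, where the target inequality becomes $\bar{\ell} + \sigma\sqrt{k/2 - 1} < \tfrac{1}{2}\log(\lceil q/2 \rceil \lfloor q/2 \rfloor)$. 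I would split the analysis by partition type: near-balanced partitions (where $\sigma$ is small and $\bar{\ell} \leq \log(q/k)$ from AM-GM already suffices) versus highly-imbalanced ones (where $\bar{\ell}$ is small but $\sigma$ must be controlled via the explicit partition shape). This is delicate because the gap between $(1/2)\log((q^2-1)/4)$ and $\log(q/2)$ is only $O(1/q^2)$ for large $q$, so the estimates must be tight; Norine's treatment of even $q$ does not apply directly, since our $\P_2$ optimum has unequal part sizes. Once the strict $\P_k < \P_2$ comparison is in hand, uniqueness of the optimizer (up to the choice of $A$ with $|A| = \lceil q/2 \rceil$) follows from the strict inequalities established in each case.
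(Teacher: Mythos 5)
Your proposal takes a genuinely different route from the paper, and it does contain a real gap. The paper's proof of Theorem~\ref{Opt5} hinges on Lemma~\ref{Claim2}, a concentration result (proved via Lemma~\ref{LemmaTech1}, an AM--GM argument applied to the quantities $P_S(q)$ and $Q_S(q)$) showing that any feasible $\bbeta$ with $\OBJ_q(\bbeta)\geq \OBJ_q(\balpha)$ must place weight at least $\psi(q)\in\{0.76,0.80\}$ on components indexed by sets of size $\lceil q/2\rceil$ or $\lfloor q/2\rfloor$. Combined with Theorem~\ref{JackTheRipper2}, this forces the support into one of only five concrete configurations, each of which the paper eliminates (or accepts, in Case~5) by elementary inequalities. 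You skip the concentration step entirely and instead plan to enumerate and optimize over every $\P_k$ with $2\le k\le q-1$ and over $\Q_2$. Your $\P_2$ computation (linearity in $\alpha_{A_1}$, concavity of $g$ in the part size $a$, and $a^*\le\lceil q/2\rceil$ once $\sqrt{1-4\gamma}<1/q$) and your $\Q_2$ computation (monotonicity of $\partial_s f$ via $s/\sqrt{s^2-4\gamma}$) are both correct and are, if anything, cleaner than the paper's corresponding subcases.

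The gap is the $\P_k$ case for $k\geq 3$, which you yourself flag as ``the main obstacle'' and only sketch (``I would prove\ldots'', ``I would split the analysis\ldots''). The Lagrange formula $\bar\ell+\sigma\sqrt{k(1-2\gamma)-1}$ is the correct interior critical value, but it is only valid when the resulting $\alpha_i=\tfrac1k+(\ell_i-\bar\ell)\sqrt{k(1-2\gamma)-1}/(k\sigma)$ are all nonnegative; for imbalanced partitions with small parts the nonnegativity constraints bind and force a boundary analysis, which collapses some supports into lower-$k$ cases and makes the clean closed form inapplicable. Even granting the formula, you would then need to verify $\bar\ell+\sigma\sqrt{k/2-1}<\tfrac12\log(\lceil q/2\rceil\lfloor q/2\rfloor)$ over all $k\geq 3$ and all integer partitions of $q$ into $k$ positive parts, which is a two-parameter family of inequalities that you acknowledge is delicate (the margin is only $O(1/q^2)$). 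This is precisely the work that Lemma~\ref{Claim2} is designed to avoid: once the weight is forced onto sets of size $\lceil q/2\rceil$ or $\lfloor q/2\rfloor$, the feasible supports become so constrained that no $\P_k$ with $k\geq 3$ needs to be optimized at all. Without either completing the $\P_k$ comparison for $k\geq 3$ or establishing a substitute for the concentration lemma, your argument is incomplete.
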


Note that the vector $\balpha$ defined in Theorem \ref{Opt5} is a feasible vector. We will use a constraint that is equivalent to $\E_q(\bbeta)\geq \gamma$ for all $\bbeta \in \FEAS(\gamma)$, specifically, $1-2\E_q(\bbeta) \leq 1-2\gamma$. In addition, note that 
\begin{equation} \label{Econstraint}
1-2\E_q(\bbeta) = \sum_{(B,S): B\cap S\neq \emptyset} \beta_B\beta_S =\sum_S P_S(q)\beta_S,
\end{equation}
where
$$
P_S(q):=\sum_{B: B\cap S \neq \emptyset} \beta_B.
$$
Therefore,
\begin{equation}\label{PSineq}
\sum_S P_S(q)\beta_S = 1-2\E_q(\bbeta) \leq  1-2\gamma.
\end{equation}

Recall that $q$ is the number of colors. We continue with two lemmas  which hold for all integers $q\geq 2$ that will be used in the proof of Theorem \ref{Opt5}. For $\bbeta=(\beta_S)_{S\subseteq [q], S\neq \emptyset} \in \FEAS_q(\gamma)$ and every $S \in \supp_q(\bbeta)$, let
$$ 
Q_S(q):=\frac{|S|}{q}.
$$

Lemma \ref{LemmaTech1} below is a technical result to be used in the proof of Lemma \ref{Claim2}. 

\begin{lemma}\label{LemmaTech1}
Let $\gamma$ satisfy $0 < \gamma \leq 1/4$ and let $\bbeta \in \FEAS_q(\gamma)$. Then 
\begin{equation}\label{ineq1}
\sum_{S} \paren{2\sqrt{2Q_S(q)}-(3-4\gamma)} \beta_S \leq 0.
\end{equation}
\end{lemma}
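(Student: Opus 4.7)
The plan is to combine an auxiliary bound on $\sum_S Q_S(q)\beta_S/P_S(q)$ with a single carefully weighted application of AM--GM to each summand of $\sum_S 2\sqrt{2Q_S(q)}\beta_S$. The whole argument takes only a few lines once the right split is identified.

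The first step is to prove the auxiliary bound $\sum_S Q_S(q)\beta_S/P_S(q) \leq 1$. Writing $Q_S(q) = |S|/q$ and interchanging the order of summation yields
\[
q\sum_S \frac{Q_S(q)\beta_S}{P_S(q)} \;=\; \sum_{c \in [q]}\sum_{S \ni c}\frac{\beta_S}{P_S(q)}.
\]
For any $c \in S$, the definition of $P_S(q)$ immediately gives $P_S(q) \geq \sum_{B \ni c}\beta_B$, since any $B$ containing $c$ intersects $S$. Consequently, for each fixed $c$,
\[
\sum_{S \ni c}\frac{\beta_S}{P_S(q)} \;\leq\; \sum_{S \ni c}\frac{\beta_S}{\sum_{B\ni c}\beta_B} \;=\; 1
\]
(with the convention that the inner sum is $0$ when no set in the support contains $c$). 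Summing over $c \in [q]$ completes the step.

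The second step is to apply the elementary AM--GM inequality $\sqrt{ab}\leq (a+b)/2$ termwise with the asymmetric split $a = 4P_S(q)\beta_S$, $b = 2Q_S(q)\beta_S/P_S(q)$ (well-defined since $P_S(q) \geq \beta_S > 0$ for every $S \in \supp_q(\bbeta)$). Since $ab = 8Q_S(q)\beta_S^2$, this gives the termwise bound $2\sqrt{2Q_S(q)}\beta_S \leq 2P_S(q)\beta_S + Q_S(q)\beta_S/P_S(q)$. Summing over $S$ and invoking both the auxiliary bound of the previous step and the feasibility inequality \eqref{PSineq} then yields
\[
\sum_S 2\sqrt{2Q_S(q)}\,\beta_S \;\leq\; 2\sum_S P_S(q)\beta_S \;+\; \sum_S \frac{Q_S(q)\beta_S}{P_S(q)} \;\leq\; 2(1-2\gamma) + 1 \;=\; 3-4\gamma,
\]
which rearranges to the desired inequality.

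The main obstacle is identifying the correct AM--GM split in the second step. Any factorization $(a,b)$ with $ab = 8Q_S(q)\beta_S^2$ produces some upper bound, but most natural choices (for example the symmetric $a=b$) lose a factor and yield only the weaker estimate $3-2\gamma$. The asymmetric split above is engineered precisely so that the coefficient of $\sum_S P_S(q)\beta_S$ after summation is exactly $2$, which is the multiplier needed to convert the $1-2\gamma$ bound from \eqref{PSineq} into the target $3-4\gamma$.
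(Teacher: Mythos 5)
Your proof is correct and follows essentially the same strategy as the paper: the same auxiliary inequality $\sum_S Q_S(q)\beta_S/P_S(q) \le 1$, established by the same double-counting argument, closed by a single weighted AM--GM together with $\sum_S P_S(q)\beta_S \le 1-2\gamma$ and $\sum_S\beta_S=1$. The paper's split is $2\sqrt{2Q_S(q)} \le (2-4\gamma)\,Q_S(q)/P_S(q) + P_S(q)/(1-2\gamma)$, which is $\gamma$-dependent but has the same product $2Q_S(q)$ as your constant-weight choice, and both produce the coefficients $(c_1,c_2)$ with $c_1 c_2 = 2$ and $c_1 + c_2(1-2\gamma) = 3-4\gamma$, so the two splits land on exactly the same bound.
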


\begin{proof}
By a comparison of the geometric and arithmetic mean of two numbers (AMGM) we obtain
\begin{equation}\label{amgm}
2\sqrt{2Q_S(q)} \leq (2-4\gamma)\frac{Q_S(q)}{P_S(q)}+\frac{1}{1-2\gamma}P_S(q).
\end{equation}
Thus,
\begin{equation} \label{elprimero} 
\sum_{S} \paren{2\sqrt{2Q_S(q)}-(3-4\gamma)} \beta_S \leq \sum_{S} \paren{(2-4\gamma)\frac{Q_S(q)}{P_S(q)}+\frac{1}{1-2\gamma}P_S(q)-(3-4\gamma)} \beta_S.
\end{equation}

Now we show that
\begin{equation} \label{elsegundo}
\sum_{S} \paren{(2-4\gamma)\frac{Q_S(q)}{P_S(q)}+\frac{1}{1-2\gamma}P_S(q)-(3-4\gamma)} \beta_S \leq 0.
\end{equation} 
Let us prove that
\begin{equation} \label{2ndineq}
\sum_{S} \frac{Q_S(q)}{P_S(q)} \beta_S\leq 1.
\end{equation}
Let $\emptyset \neq S \subseteq [q]$, and $\chi_S:[q] \to \set{0,1}$ be defined by 
$$ \chi_S(x)= \begin{cases}
		      	1  & \text{ if } x \in S\\
		      	0 & \text{ if } x \notin S.
			  \end{cases}
$$
Take any $x \in [q]$. Then
\begin{equation}\label{chiineq}
\displaystyle
\sum_S \frac{\chi_S(x)}{P_S(q)} \beta_S = \sum_{S: x \in S} \frac{1}{P_S(q)} \beta_S = \sum_{S: x \in S} \paren{\frac{1}{\displaystyle \sum_{B:B\cap S \neq \emptyset} \beta_B}} \beta_S \leq \sum_{S: x \in S} \paren{\frac{1}{\displaystyle \sum_{B:x \in B} \beta_B}} \beta_S=1,
\end{equation}
where for $x \in S$, the last inequality in (\ref{chiineq}) is due to the fact that $x \in B$ implies $B \cap S \neq \emptyset$. 

Note that $\sum_{x \in [q]} \chi_S(x) =\abs{S}$. Then, using (\ref{chiineq}), we have
\begin{equation} 
q=\sum_{x\in [q]} 1 \geq \sum_{x \in [q]} \sum_S \frac{\chi_S(x)}{P_S(q)} \beta_S  = \sum_S \frac{\sum_{x \in [q]} \chi_S(x)}{P_S(q)} \beta_S =\sum_S \frac{|S|}{P_S(q)} \beta_S  =q\sum_S \frac{Q_S(q)}{P_S(q)}\beta_S \nonumber,
\end{equation}
which implies (\ref{2ndineq}).

Using (\ref{2ndineq}), (\ref{PSineq}), and $\V_q(\bbeta)=1$, we obtain (\ref{elsegundo}). Therefore, (\ref{elprimero}) and (\ref{elsegundo}) imply that (\ref{ineq1}) holds. The proof is complete.
\end{proof}

Next we show that the sum of the components corresponding to sets of size roughly $q/2$ in the support of a feasible vector $\bbeta$ with $\OBJ_q(\bbeta)$ being at least as large as the optimal value claimed in Theorem \ref{Opt5} carry more ``weight'' than the sum of components corresponding to sets of other sizes (recall that $\V_q(\bbeta)=1$ and all components of feasible vectors are nonnegative).


\begin{lemma}\label{Claim2}
Let $\bbeta \in \FEAS_q(\gamma)$, where $q \geq 5$. If
\begin{equation} \label{OBJineq}
\OBJ_q(\bbeta) \geq \OBJ_q(\balpha)= \frac{1}{2}\log\paren{\lceil q/2 \rceil \cdot \lfloor q/2 \rfloor}+\frac{\sqrt{1-4\gamma}}{2}\log\paren{\frac{\lceil q/2 \rceil}{\lfloor q/2 \rfloor}},
\end{equation}
then for all $\gamma$ sufficiently close to $1/4$,
\begin{equation} \label{betaineq0}
\sum_{S: |S| =\lceil q/2 \rceil,\lfloor q/2 \rfloor} \beta_S \geq \begin{cases} 
0.76 & \quad \text{if $q=5$}\\
0.80  & \quad \text{if $q\geq 7$}.
\end{cases}.
\end{equation}
\end{lemma}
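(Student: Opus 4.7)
The plan is to aggregate the $2^q-1$ coordinates of $\bbeta$ by the size of the indexing set, reducing (\ref{OBJineq}) and Lemma~\ref{LemmaTech1} to a three-constraint linear program in $q$ variables, and then to exploit a single well-chosen linear combination of those constraints. Set $\beta_k := \sum_{|S|=k}\beta_S$ for $k=1,\dots,q$. Feasibility gives $\beta_k \geq 0$ and $\sum_k \beta_k = 1$; hypothesis (\ref{OBJineq}) becomes
$\sum_k \beta_k \log k \;\geq\; \tfrac{1}{2}\log(\lceil q/2\rceil \lfloor q/2\rfloor) + \tfrac{\sqrt{1-4\gamma}}{2}\log(\lceil q/2\rceil / \lfloor q/2\rfloor)$;
and Lemma~\ref{LemmaTech1} collapses to $\sum_k \beta_k \bigl(2\sqrt{2k/q} - (3-4\gamma)\bigr) \leq 0$. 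The quantity to lower-bound is $w := \beta_{(q-1)/2} + \beta_{(q+1)/2}$.

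Define the auxiliary function $f(k) := \log k - 2\sqrt{2k/q} + (3-4\gamma)$. The coefficient $2$ on $\sqrt{2k/q}$ is chosen precisely so that $f'(q/2)=0$, i.e., the continuous maximum of $f$ sits at the midpoint. Adding (\ref{OBJineq}) to the negation of Lemma~\ref{LemmaTech1} gives
$$
\sum_k \beta_k f(k) \;\geq\; \tfrac{1}{2}\log\bigl(\lceil q/2 \rceil \lfloor q/2 \rfloor\bigr) + \tfrac{\sqrt{1-4\gamma}}{2}\log\bigl(\lceil q/2\rceil / \lfloor q/2 \rfloor\bigr).
$$
A direct computation yields $f''(k) = -1/k^2 + \sqrt{2/q}/(2 k^{3/2}) < 0$ on $[1, 2q]$, so $f$ is strictly concave on our range. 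For odd $q \geq 5$, among integers $k \in \{1,\dots,q\}$ the largest value of $f$ is $M_1 := f((q+1)/2)$ and the next-largest is $M_2 := f((q+3)/2)$; the comparisons $f((q+1)/2) > f((q-1)/2)$ and $f((q+3)/2) > f((q-3)/2)$ follow from short Taylor estimates. Since $\beta_k\geq 0$ and $\sum_k \beta_k = 1$, we have $\sum_k \beta_k f(k) \leq w M_1 + (1-w) M_2$, and combining with the previous display gives the explicit lower bound
$$
w \;\geq\; \frac{\tfrac{1}{2}\log(\lceil q/2\rceil\lfloor q/2\rfloor) + \tfrac{\sqrt{1-4\gamma}}{2}\log(\lceil q/2\rceil/\lfloor q/2\rfloor) - M_2}{M_1 - M_2}.
$$

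At $\gamma = 1/4$, direct evaluation yields $w \geq 0.77$ for $q=5$ and $w \geq 0.805$ for $q=7$. Expanding in $u := 1/q$, both numerator and denominator are $\Theta(u^2)$ with leading coefficients $7/4$ and $2$ respectively, so the ratio increases in $q$ towards the limit $7/8 = 0.875$ and in particular exceeds $0.80$ for all odd $q \geq 7$. By continuity of every quantity in $\gamma$, the slightly looser inequalities $w \geq 0.76$ (for $q=5$) and $w \geq 0.80$ (for $q \geq 7$) survive on a sufficiently small closed interval around $\gamma = 1/4$. The main obstacle is upgrading the leading-order asymptotics to a bound valid uniformly in odd $q \geq 7$, as opposed to just the smallest cases; the cleanest route is a finite numerical check for the first few values combined with an explicit bound on the $O(1/q^3)$ remainder in the Taylor expansion.
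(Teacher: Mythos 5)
Your reduction is the same one the paper uses: combine $\OBJ_q(\bbeta)\geq\OBJ_q(\balpha)$ with Lemma~\ref{LemmaTech1}, aggregate by $|S|$, and bound the resulting weighted sum by the two largest coefficient values to extract a lower bound on $w=\beta_{(q-1)/2}+\beta_{(q+1)/2}$. The formula you arrive at for the lower bound on $w$ is exactly the paper's $\mu(q,\gamma)$. So the architecture is right. But there are two substantive gaps where the hard analytic work is needed and you only gesture at it.

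First, your claim that $f\bigl((q+1)/2\bigr) > f\bigl((q-1)/2\bigr)$ and $f\bigl((q+3)/2\bigr) > f\bigl((q-3)/2\bigr)$ ``follow from short Taylor estimates'' is not a proof. This asymmetry (the maximum of the concave function sits at the non-integer $q/2$, and you must show the right-hand integer dominates the left-hand one) is precisely the paper's Claim~\ref{Claim0}, proved via a monotonicity argument for $L(t)=\log\bigl((1+t)/(1-t)\bigr)-2\bigl(\sqrt{1+t}-\sqrt{1-t}\bigr)$ and the observation $L'(t)>0$ on $(0,1)$. Concavity alone does not give you this; a Taylor expansion could, but the remainder must be controlled, which you do not do. (Also, a terminological caution: $M_2=f\bigl((q+3)/2\bigr)$ is not in general the ``next-largest'' value of $f$ over all integers --- at $q=5$ one has $f(2)>f(4)$ --- but your inequality is still valid because $M_2$ is the maximum over the complementary range, which is all your split actually uses.)

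Second, and as you yourself flag, the uniform lower bound $\mu(q,1/4)\geq 4/5$ for all odd $q\geq 7$ is the main analytic content of the lemma, and your proposal does not supply it. The leading-order Taylor coefficients $-7/4$ and $-2$ do give the limit $7/8$, but ``the ratio increases in $q$'' does not follow from leading-order asymptotics, and the $q=7$ value ($\approx 0.803$, not $0.805$; similarly $\mu(5,1/4)\approx 0.768$, not $0.77$) leaves very little slack. The paper's Claim~\ref{Claim1} handles this by substituting $x=1/q$, showing $F'''(x)<0$ on $[0,1/7]$, and then using sign changes of $F''$ and $F'$ together with endpoint values to conclude $F\geq 0$. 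Some argument of that kind --- or your proposed ``finite check plus an explicit Taylor remainder bound,'' carried out --- is required. Finally, your continuity argument for $\gamma$ near $1/4$ is weaker than the paper's: the paper shows $\mu(q,\gamma)$ is decreasing in $\gamma$ near $1/4$, so $\mu(q,\gamma)\geq \mu(q,1/4)$ directly, with no slack lost; continuity alone forces you to spend slack you barely have.
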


\begin{proof}
Let $\bbeta=(\beta_S) \in \FEAS_q(\gamma)$. Then by Lemma \ref{LemmaTech1}, we have
$$  \sum_{S} \paren{2\sqrt{2Q_S(q)}-(3-4\gamma)} \beta_S \leq 0.$$
Then
\begin{align*}
&\sum_{S} \paren{2\sqrt{2Q_S(q)}-(3-4\gamma)} \beta_S \\
&\leq \OBJ_q(\bbeta)-\frac{1}{2}\log\paren{\lceil q/2 \rceil \cdot \lfloor q/2 \rfloor}-\frac{\sqrt{1-4\gamma}}{2}\log\paren{\frac{\lceil q/2 \rceil}{\lfloor q/2 \rfloor}} \quad \text{ by ($\ref{OBJineq}$),}\\
&= \sum_S \log\paren{|S|}\beta_S-\frac{1}{2}\log\paren{\lceil q/2 \rceil \cdot \lfloor q/2 \rfloor}+\frac{\sqrt{1-4\gamma}}{2}\log\paren{\frac{\lfloor q/2 \rfloor}{\lceil q/2 \rceil}} \\
& =\sum_S \paren{\log(2Q_S(q))+\frac{1}{2}\log\paren{\frac{q^2}{q^2-1}}+\frac{\sqrt{1-4\gamma}}{2}\log \paren{\frac{\lfloor q/2 \rfloor}{\lceil q/2 \rceil}}} \beta_S,
\end{align*}
and therefore,
\begin{align} \label{mainineq2} \small
0&\leq \sum_S \paren{\log(2Q_S(q))+\frac{1}{2}\log\paren{\frac{q^2}{q^2-1}}+\frac{\sqrt{1-4\gamma}}{2}\log \paren{\frac{\lfloor q/2 \rfloor}{\lceil q/2 \rceil}} -  2\sqrt{2Q_S(q)}+3-4\gamma)} \beta_S \nonumber \\
&=\sum_{m \in \{2,4,6,\dots, 2q\}} \,\sum_{S: |S|=m/2}  f(m, q, \gamma) \, \beta_S,
\end{align}
\normalsize
where we define
\begin{equation}\label{fbetaineq}\small 
f(m,q,\gamma)=\log \paren{\frac{m}{q}} +\frac{1}{2}\log \paren{\frac{q^2}{q^2-1}}+\frac{\sqrt{1-4\gamma}}{2}\log \paren{\frac{\lfloor q/2 \rfloor}{\lceil q/2 \rceil}}-2\sqrt{\frac{m}{q}}+3-4\gamma.
\end{equation}
\normalsize

The following claim will help us find maximum values of $f(m, q, \gamma)$ over certain ranges of $m$.

\begin{claim} \label{Claim0}
Let $q\geq 5$ and $m \in [2, 2q]$ be real numbers. For fixed $q$, define
$$ h_q(m)=\log\paren{\frac{m}{q}} -2\sqrt{\frac{m}{q}}.$$
Then $h_q(m)$ is increasing on $[2,q)$ and is decreasing on $(q,2q]$. Moreover, for a real number $k\in [1,q-2]$, 
\begin{equation}\label{Claim0ineqmain}  
h_q(q-k)<h_q(q+k).
\end{equation}
\end{claim} 
\begin{proof}[Proof of Claim \ref{Claim0}]
Observe that 
$$ \frac{d}{dm} h_q(m)=\frac{1-\sqrt{m/q}}{m}.$$
Since $\sqrt{m/q}>1$ whenever $q<m\leq 2q$ and $\sqrt{m/q}<1$ whenever $2\leq m <q$, then the first part of the claim holds.

If we substitute $t=k/q$, the inequality (\ref{Claim0ineqmain}) is equivalent to
\begin{equation} \label{Claim0ineq1}
\log\left( \frac{1+t}{1-t}\right)>2\left(\sqrt{1+t}-\sqrt{1-t}\right)
\end{equation}
for $1/q \leq t \leq 1-2/q$. 

Let us show that (\ref{Claim0ineq1}) holds for any $t\in (0,1)$. Since $[1/q, 1-2/q] \subseteq (0,1)$, it will imply (\ref{Claim0ineqmain}).

Consider the function
$$
 L(t):= \log\left( \frac{1+t}{1-t}\right) -2\left(\sqrt{1+t}-\sqrt{1-t}\right) \, .
$$
For $t \in (0,1)$, we have
$$
L'(t) = \frac{2}{1-t^2} - \left( \frac{1}{\sqrt{1+t}} + \frac{1}{\sqrt{1-t}}  \right) 
= \frac{2 - \sqrt{1-t^2}\left(\sqrt{1+t} + \sqrt{1-t}\right)}{1-t^2}.
$$
It can be easily verified that
$$
\sqrt{1-t^2} \cdot (\sqrt{1+t} + \sqrt{1-t}) < \sqrt{1+t} + \sqrt{1-t} < 2.
$$
Therefore, $L'(t)>0$ for all $t \in (0,1)$. As$L(0) = 0$ and $L$ is continuous at $0$, $L(t)>0$ on $(0,1)$, which proves the inequality (\ref{Claim0ineq1}). Therefore,(\ref{Claim0ineqmain}) holds. 
\end{proof}

Let $g(q,\gamma):=\frac{1}{2}\log \paren{\frac{q^2}{q^2-1}}+\frac{\sqrt{1-4\gamma}}{2}\log \paren{\frac{\lfloor q/2 \rfloor}{\lceil q/2 \rceil}}+3-4\gamma.$
Claim \ref{Claim0} implies that 
\[
\max_{m\in \{q-1, q+1\}} f(m, q, \gamma)=g(q,\gamma)\, +\max_{m\in \{q-1, q+1\}} h_q(m) =g(q, \gamma)\,+\, h_q(q+1)=f(q+1, q, \gamma)
\]
and 
\[
\max_{m\notin \{q-1, q+1\}} f(m, q, \gamma)=g(q,\gamma)+\max_{m\notin \{q-1, q+1\}} h_q(m)=g(q, \gamma) + h_q(q+3)=f(q+3, q, \gamma).
\]
By combining these observations with (\ref{fbetaineq}) we obtain
\begin{align*}
0 &\leq \sum_{m \in \{q-1, q+1 \}} \,\sum_{S:|S|=m/2}  f(m, q, \gamma) \, \beta_S \, + \sum_{m \notin \{q-1, q+1 \}} \,\sum_{S:|S|=m/2}  f(m, q, \gamma) \, \beta_S\\
   &\leq f(q+1, q, \gamma) \sum_{S: |S| \in \{\lfloor q/2 \rfloor, \lceil q/2 \rceil \}}\beta_S+ f(q+3, q, \gamma) \sum_{S: |S|\notin \{\lfloor q/2 \rfloor, \lceil q/2 \rceil\}}\beta_S\\
&= (f(q+1, q, \gamma)-f(q+3, q, \gamma)) \sum_{S: |S|\in \{\lfloor q/2 \rfloor, \lceil q/2 \rceil\}}\beta_S \,+ f(q+3, q, \gamma).
\end{align*}
By Claim \ref{Claim0}, we have $f(q+1,q, 1/4)-f(q+3, q, 1/4)>0$ for all $q\geq 5$. Since for fixed $q$ and $m$, $f(m,q, \gamma)$ is continuous as a function of $\gamma$ on $[0, 1/4]$, then $$f(q+1, q, \gamma)-f(q+3, q, \gamma)>0$$ for all $\gamma$ sufficiently close to $1/4$. Therefore,
\begin{align} \label{betainequito}
\sum_{S: |S|\in \{\lceil q/2 \rceil, \lfloor q/2 \rfloor \}} \beta_S &\geq \frac{-f(q+3, q, \gamma)}{-f(q+3, q, \gamma)+f(q+1, q, \gamma)} \nonumber \\
                                                                                             & =\frac{\log\paren{\frac{q+3}{\sqrt{q^2-1}}}-2\sqrt{\frac{q+3}{q}}-\frac{\sqrt{1-4\gamma}}{2}\log\paren{\frac{\lceil q/2 \rceil}{\lfloor q/2 \rfloor}}-4\gamma+3}{\log\paren{\frac{q+3}{q+1}}+2\paren{\sqrt{\frac{q+1}{q}}-\sqrt{\frac{q+3}{q}}}}\nonumber =:\mu(q,\gamma).
\end{align}
Let 
$$ 
A=\log\paren{\frac{q+3}{q+1}}+2\paren{\sqrt{\frac{q+1}{q}}-\sqrt{\frac{q+3}{q}}}.
$$
Then 
$$ 
\frac{d}{d\gamma} \mu(q,\gamma)=\frac{1}{A}\paren{\frac{\log\paren{\frac{\lceil q/2 \rceil}{\lfloor q/2 \rfloor}}}{\sqrt{1-4\gamma}}-4}.
$$
Since for all $q\geq 5$ we have $A<0$ and
$$
 \frac{\log\paren{\frac{\lceil q/2 \rceil}{\lfloor q/2 \rfloor}}}{\sqrt{1-4\gamma}}-4>0,
$$
for $\gamma \leq 1/4$ sufficiently close to $1/4$, then $\frac{d}{d\gamma} \mu(q,\gamma) <0$ and
\begin{equation} \label{muinequito} 
\mu(q, \gamma) \geq \mu(q, 1/4)=\frac{\log\paren{\frac{q+3}{\sqrt{q^2-1}}}-2\sqrt{\frac{q+3}{q}}+2}{\log\paren{\frac{q+3}{q+1}}+2\paren{\sqrt{\frac{q+1}{q}}-\sqrt{\frac{q+3}{q}}}}  
\end{equation}
for all $q\geq 5$ and $\gamma \leq 1/4$ sufficiently close to $1/4$.

Note that $\mu(5,1/4)\approx 0.768$. We prove the following claim to show that $\mu(q, 1/4)\geq 0.8$ for all $q\geq 7$. 
\begin{claim} \label{Claim1}
For all $q\geq 7$, $\mu(q, 1/4)\geq 4/5$.
\end{claim} 
\begin{proof}
We will prove that, for all $q\geq 7$,
\begin{equation}\label{Claim1ineq}
\frac{\log\paren{\frac{q+3}{\sqrt{q^2-1}}}-2\sqrt{\frac{q+3}{q}}+2}{\log\paren{\frac{q+3}{q+1}}+2\paren{\sqrt{\frac{q+1}{q}}-\sqrt{\frac{q+3}{q}}}} \geq \frac{4}{5}.
\end{equation}

Note that since $f(x)=x-\log(x)$ is strictly increasing for all $x>1$, then
$$
\sqrt{\frac{q+1}{q}}-\log\paren{\sqrt{\frac{q+1}{q}}} < \sqrt{\frac{q+3}{q}}-\log\paren{\sqrt{\frac{q+3}{q}}},
$$
and hence, the denominator of the left side of (\ref{Claim1ineq}) is negative for all $q\geq 7$. Therefore, proving (\ref{Claim1ineq}) is equivalent to showing that
$$
2\sqrt{\frac{q+3}{q}}-2\log \sqrt{\frac{q+3}{q}} +8\sqrt{\frac{q+1}{q}}-3\log \sqrt{\frac{q+1}{q}}-5\log\sqrt{\frac{q}{q-1}}-10\geq 0.
$$
With the substitution $q = \frac{1}{x}$, it suffices to prove that, for all $x\in [0, 1/7]$,
$$
F(x):=2\sqrt{1 + 3x} - \ln(1 + 3x) + 8\sqrt{1 + x} - \frac32\ln(1 + x)+\frac{5}{2}\ln(1 - x) -10 \geq 0 .
$$
We have
$$F'(x) = \frac{3}{\sqrt{1 + 3x}}
- \frac{3}{1 + 3x} + \frac{4}{\sqrt{1 + x}}
- \frac{3}{2(1 + x)} - \frac{5}{2(1 - x)},$$
$$F''(x) = - \frac{9}{2(1 + 3x)^{3/2}} + \frac{9}{(1 + 3x)^2} - \frac{2}{(1 + x)^{3/2}} + \frac{3}{2(1 + x)^2} - \frac{5}{2(1 - x)^2},$$
and
$$F'''(x) = \frac{81}{4(1 + 3x)^{5/2}} - \frac{54}{(1 + 3x)^3} + \frac{3}{(1 + x)^{5/2}} - \frac{3}{(1 + x)^3}
- \frac{5}{(1 - x)^3}.$$

We have, for all $x\in [0, 1/7]$,
\begin{align*}
	F'''(x)
	&\le \left(\frac{81}{4(1 + 3x)^2} -  \frac{54}{(1 + 3x)^3}\right) + \left(\frac{3}{(1 + x)^2} - \frac{3}{(1 + x)^3}
	- \frac{5}{(1 + x)^3}\right)\\
	&= \frac{27(9x - 5)}{4(1 + 3x)^3} + \frac{3x - 5}{(1 + x)^3} < 0.
\end{align*}
Note that $F''(0) > 0$ and $F''(1/7) < 0$. Thus, there exists $x_0 \in (0, 1/7)$ such that $F''(x_0) = 0$, $F''(x) > 0$ for $x\in [0, x_0)$,
and $F''(x) < 0$ for $x\in (x_0, 1/7]$. Since $F'(0) = 0$ and $F'(1/7) < 0$, there exists $x_1 \in (x_0, 1/7)$ such that $F'(x_1) = 0$, $F'(x) > 0$ on $(0, x_1)$, and $F'(x) < 0$ on $(x_1, 1/7)$. Note that $F(0) = 0$ and $F(1/7) > 0$. Thus, $F(x) \ge 0$ on $[0, 1/7]$ and (\ref{Claim1ineq}) holds.
\end{proof}

Therefore, by (\ref{muinequito}) and (\ref{Claim1ineq}), we have
$$
\sum_{S: |S|=\lfloor q/2 \rfloor,\lceil q/2 \rceil} \beta_S \geq  \begin{cases} 
0.76 & \quad \text{if $q=5$}\\
0.80  & \quad \text{if $q\geq 7$}.
\end{cases},
$$
as long as $\bbeta \in \FEAS_q(\gamma)$ for $\gamma \leq 1/4$ sufficiently close to $1/4$, as desired.
\end{proof}

We are ready to embark on the proof of Theorem \ref{Opt5}. We use the following approach:

\begin{enumerate}[(i)]
\item Assume that $\bbeta \in \FEAS_q(\gamma)$ is a solution of \textbf{OPT} for odd $q\geq 5$. Then since the vector $\balpha$ defined in the statement of Theorem \ref{Opt5} is also a feasible vector, we must have $\OBJ_q(\bbeta)\geq \OBJ_q(\balpha)$.

\item By Lemma \ref{Claim2}, (\ref{betaineq0}) holds, which implies that $\supp_q(\bbeta)$ must have at least one set of size $\lceil q/2 \rceil$ or $\lfloor q/2 \rfloor$. Additionally, by Lemma \ref{JackTheRipper2}, $\supp_q(\bbeta)$ contains at most one set of size $\lceil q/2 \rceil$ and at most two sets of size $\lfloor q/2 \rfloor$.  Therefore, we divide our argument into the following disjoint cases. 

The support of $\bbeta$ contains

\textbf{Case 1:} no set of size $\lceil q/2 \rceil$ and exactly one set of size $\lfloor q/2 \rfloor$, or vice versa.

\textbf{Case 2:} no set of size $\lceil q/2 \rceil$ and exactly two sets of size $\lfloor q/2 \rfloor$.

\textbf{Case 3:} exactly one set $A$ of size $\lceil q/2 \rceil$, a set $B$ of size $\lfloor q/2 \rfloor$, but not $A^c$.

\textbf{Case 4:} exactly one set $A$ of size $\lceil q/2 \rceil$ and $A^c$, and $|\supp_q(\bbeta)|>2$.

\textbf{Case 5:} exactly one set $A$ of size $\lceil q/2 \rceil$ and $A^c$ only.
\end{enumerate}

We show that cases 1, 2, 3, and 4 are impossible, and conclude that $\bbeta$ must fall into Case 5. Then we show that $\bbeta$ must be of the same form as $\balpha$ in Theorem \ref{Opt5}.

\begin{proof}[Proof of Theorem \ref{Opt5}]
Throughout the proof we will be making a series of claims which hold for all $\gamma$ sufficiently close to $1/4$. Suppose that $\bbeta \in \FEAS_q(\gamma)$ is a solution of \textbf{OPT} for odd $q\geq 5$. Then
\begin{equation}  \label{mainineq1}
\OBJ_q(\bbeta) \geq \OBJ_q(\balpha),
\end{equation}
where $\balpha$ is a vector as defined in the statement of Theorem \ref{Opt5}, and by Lemma \ref{Claim2},
\begin{equation}\label{betaineq}
\sum_{S: |S|=\lfloor q/2 \rfloor,\lceil q/2 \rceil} \beta_S \geq \psi(q)= \begin{cases}
                								          	          0.76 & \text{ if } q=5\\
											                  0.80 & \text{ if } q\geq 7,
                                                                                                        \end{cases}
\end{equation}
Let us recall our constraints for feasible vectors. We have
\begin{equation} \label{Vconstraint0}
\V_q(\bbeta)=\sum_S \beta_S=1
\end{equation}
and
\begin{equation} \label{Econstraint0}
1-2\gamma \geq 1-2\E_q(\bbeta) = \sum_{(B,S): B\cap S\neq \emptyset} \beta_B\beta_S
\end{equation}
by (\ref{Econstraint}).

As was explained above, we proceed with our five cases.




\noindent \textbf{Case 1:} \textit{The support of $\bbeta$ contains no set of size $\lceil q/2 \rceil$ and exactly one set of size $\lfloor q/2 \rfloor$, or vice versa.}

Let $S$ be the only set of size $\lfloor q/2 \rfloor$ in $\supp_q(\bbeta)$ and suppose that no sets of size $\lceil q/2 \rceil$ are present in $\supp_q(\bbeta)$. Then by (\ref{betaineq}), $\beta_S \geq \psi(q)$, and using (\ref{Econstraint0}), we have
$$ \psi(q)^2 \leq \beta_S^2 \leq 1-2\E_q(\bbeta) \leq 1-2\gamma.$$
However, as $\gamma \to 0.25^-$, note that $\psi(5)^2\to 0.76^2\geq 0.57$, $\psi(q)^2\to 0.8^2= 0.64$ for $q\geq 7$, and $1-2\gamma \to 0.5$. Therefore, the inequality above cannot hold for all $\gamma$ sufficiently close to $1/4$. Thus, $\bbeta \notin \FEAS_q(\gamma)$, which is a contradiction.

Exactly the same argument holds if $S$ is the only set of size $\lceil q/2 \rceil$ in $\supp_q(\bbeta)$ and there are no sets of size $\lfloor q/2 \rfloor$ in $\supp_q(\bbeta)$. In either subcase we obtain a contradiction and therefore, this entire case is impossible.

For Cases 2, 3, and 4 we apply the results from \cite{Ma} that were stated in Section 3.

\noindent \textbf{Case 2:} \textit{The support of $\bbeta$ contains no set of size $\lceil q/2 \rceil$ and exactly two sets of size $\lfloor q/2 \rfloor$.}

Let $S_1$ and $S_2$ be the only two sets of size $\lfloor q/2 \rfloor$ in $\supp_q(\bbeta)$. By Lemma \ref{JackTheRipper2}, we have $S_1\cap S_2 =\emptyset$. Then Theorem \ref{JackTheRipper2} implies that we have only two possibilities for $\supp_q(\bbeta)$, which we consider below.

\noindent \textbf{Subcase 2.1:} $\supp_q(\bbeta)=\{S_1, S_2, (S_1\cup S_2)^c\}$.\\

Since $\beta_{S_1}+\beta_{S_2}<1$, we have
\begin{align*}
\OBJ_q(\bbeta) &= \log\paren{\lfloor q/2 \rfloor}(\beta_{S_1}+\beta_{S_2})\\
                      & < \frac{1}{2}\log\paren{\lceil q/2 \rceil \cdot \lfloor q/2 \rfloor}+\frac{\sqrt{1-4\gamma}}{2}\log\paren{\frac{\lceil q/2 \rceil}{\lfloor q/2 \rfloor}} \quad \text{for all $q\geq 5$}\\
                      &=\OBJ_q(\balpha).
\end{align*}
However, this contradicts (\ref{mainineq1}), and thus, this case is impossible.

\noindent \textbf{Subcase 2.2:} $\supp_q(\bbeta)=\{S_1, S_2, S_1\cup S_2, (S_1\cup S_2)^c\}$.\\

In this case, the entries of $\bbeta$ are subject to the following constraints
\begin{equation}\label{subcase2.2cons1}
\beta_{S_1}+\beta_{S_2}+\beta_{S_1\cup S_2}+\beta_{(S_1\cup S_2)^c}=1,
\end{equation}
\begin{equation}\label{subcase2.2cons2}
(\beta_{S_1\cup S_2}+\beta_{S_1}+\beta_{S_2})^2-2\beta_{S_1}\beta_{S_2} < 1-2\gamma,
\end{equation}
\begin{equation}\label{subcase2.2cons3}
\beta_{S_1}+\beta_{S_2} \geq \psi(q),
\end{equation}
where $\beta_{S_1} >0$, $\beta_{S_2} > 0$, $\beta_{S_1\cup S_2}> 0$, and $\beta_{(S_1\cup S_2)^c}> 0$. Solving (\ref{subcase2.2cons2}) for $\beta_{S_1\cup S_2}$ we obtain
$$
\beta_{S_1\cup S_2} \leq \sqrt{1-2\gamma+2\beta_{S_1}\beta_{S_2}}-(\beta_{S_1}+\beta_{S_2}),
$$
noting that $1-2\gamma +2\beta_{S_1}\beta_{S_2}\geq 0$ for any $\gamma \leq 1/4$. Let $t=\beta_{S_1}+\beta_{S_2}$. Then by (\ref{subcase2.2cons1}) and (\ref{subcase2.2cons2}), we have $\psi(q) \leq t \leq 1$. Since $2\beta_{S_1}\beta_{S_2} \leq t^2/2$, we have
$$ 
\beta_{S_1\cup S_2} \leq \sqrt{1-2\gamma+t^2/2}-t.
$$
Thus,
\begin{align*}
\OBJ_q(\bbeta) &=\log\paren{\lfloor q/2 \rfloor}t+\log(q-1)\beta_{S_1\cup S_2}\\
		       &\leq \log\paren{\frac{q-1}{2}}t+\log(q-1)(\sqrt{1-2\gamma+t^2/2}-t)\\	
		       &\leq -\log(2)t+\log(q-1)\sqrt{1-2\gamma+t^2/2}=: \lambda(t,q,\gamma).
\end{align*}
We will maximize $\lambda(t,q,\gamma)$ with respect to $t$. Note that $2t^2-8\gamma+4\geq 0$ for $\gamma \leq 1/4$, and hence,
$$
\frac{d\lambda}{dt}=-\log(2)+\frac{\log(q-1) \, t}{\sqrt{2t^2-8\gamma+4}} \geq 0
$$
when
$$ 
t \geq \sqrt{\frac{\log(2)\, (4-8\gamma)}{\log(q-1)^2-2\log(2)^2}}=:\tau(q, \gamma).
$$

Let us first consider when $q=5$. Note that 
$$
\tau(5, \gamma)>0.76=\psi(5)
$$
for all $\gamma \leq 1/4$. Thus, $\lambda(t,5, \gamma)$ is decreasing with respect to $t$ on $[\psi(5), \tau(5, \gamma))$ and is increasing with respect to $t$ on $[\tau(5, \gamma), 1]$. Thus,
$$ 
\OBJ_5(\bbeta)\leq \max \{\lambda(\psi(5), 5, \gamma),  \lambda(1, 5, \gamma) \}.
$$
Note that
$$
\lim_{\gamma \to \frac{1}{4}^-} \lambda(\psi(5), 5, \gamma)= \lambda(\psi(5), 5, 1/4)=-\log(2)\cdot 0.76+\log(4)\sqrt{1/2+(0.76)^2/2}\approx 0.70
$$
and
$$
\lim_{\gamma \to \frac{1}{4}^-} \lambda(1, 5, \gamma)= \lambda(1, 5, 1/4)=\log(2)\approx 0.69.
$$
Since $\lambda(t,5,\gamma)$ is continuous with respect to $\gamma$, then 
\[
\OBJ_5(\bbeta)\leq  \lambda(\psi(5), 5, \gamma) =-\log(2)\cdot 0.76+\log(4)\sqrt{1-2\gamma+(0.76)^2/2}
\]
for $\gamma$ sufficiently close to $1/4$. Since 
$$
\lim_{\gamma \to \frac{1}{4}^-} \frac{1}{2}\log(6)+\frac{\sqrt{1-4\gamma}}{2}\log\paren{\frac{3}{2}} =\frac{1}{2}\log(6) \approx 0.89,
$$ 
then 
\[
\OBJ_5(\bbeta)  \leq  \lambda(\psi(5), 5, \gamma) < \frac{1}{2}\log(6)+\frac{\sqrt{1-4\gamma}}{2}\log\paren{\frac{3}{2}} =\OBJ_5(\balpha),
\]
for $\gamma$ sufficiently close to $1/4$. We have contradicted (\ref{mainineq1}), and hence, this subcase is impossible for $q=5$.

Now suppose that $q\geq 7$. Note that
$$
\lim_{\gamma \to \frac{1}{4}^-} \tau(q, \gamma)=\tau(q,1/4)= \sqrt{\frac{2\log(2)}{\log(q-1)^2-2\log(2)^2}} \leq 0.80=\psi(q)
$$
for all $q\geq 7$. Since $\tau(q, \gamma)$ is decreasing with respect to $\gamma$ as $\gamma \to 0.25^-$, then 
$$
\tau(q, \gamma) \leq \psi(q)
$$ 
for all $\gamma \leq 1/4$ and $q\geq 7$. Thus, $\lambda(t,q, \gamma)$ is increasing on the interval $[\psi(q), 1]$ with respect to $t$, and hence,
$$
\OBJ_q(\bbeta) \leq \lambda(1,q,\gamma).
$$
Notice that
$$
 \lim_{\gamma \to \frac{1}{4}^-} \lambda(1,q,\gamma)=\lambda(1,q,1/4)= \frac{\log(q-1)}{2}<\frac{1}{2}\log\paren{\lceil q/2 \rceil \cdot \lfloor q/2 \rfloor},
$$
where the last inequality holds for all $q\geq 7$. Thus, for $\gamma$ sufficiently close to $1/4$
\[
\OBJ_q(\bbeta) < \frac{1}{2}\log\paren{\lceil q/2 \rceil \cdot \lfloor q/2 \rfloor}+\frac{\sqrt{1-4\gamma}}{2}\log\paren{\frac{\lceil q/2 \rceil}{\lfloor q/2 \rfloor}}=\OBJ_q(\balpha).
\]
However, this contradicts (\ref{mainineq1}), and thus, this subcase is also impossible for $q\geq 7$.

\noindent \textbf{Case 3:} \textit{The support of $\bbeta$ contains exactly one set $A$ of size $\lceil q/2 \rceil$, a set $B$ of size $\lfloor q/2 \rfloor$, but not $A^c$.}

Since $A^c \notin \supp_q(\bbeta)$, then $A\cap B \neq \emptyset$. If $B$ is the only set of size $\lfloor q/2 \rfloor$ in $\supp_q(\bbeta)$, then $\beta_A+\beta_B \geq \psi(q)$, and hence, by (\ref{Econstraint0}), we have
$$ \psi(q)^2 \leq (\beta_A+\beta_B)^2 \leq 1-2\E_q(\bbeta) \leq 1-2\gamma.$$
However, as was shown in Case 1, the inequality above cannot hold for all $\gamma$ sufficiently close to $1/4$. Thus, $\bbeta \notin \FEAS_q(\gamma)$, a contradiction.

We may assume that there exists another set $C\in\supp_q(\bbeta)\setminus \{B\}$ that satisfies $|C|=\lfloor q/2 \rfloor$. By Lemma \ref{JackTheRipper2},  $B\cap C=\emptyset$ and $B$ and $C$ are the only sets of size $\lfloor q/2 \rfloor$ in $\supp_q(\bbeta)$. Since $B$ and $C$ are disjoint, but they each intersect with the set $A$, Theorem \ref{JackTheRipper2} also implies that $A=B\cup C$ and that
$$ \supp_q(\bbeta)=\{ A, B, C, (B\cup C)^c\}.$$
However, then we would have $(q+1)/2=|A|=|B|+|C|=q-1,$ which is impossible. Therefore, this entire case is impossible.

\noindent \textbf{Case 4:} \textit{The support of $\bbeta$ contains exactly one set $A$ of size $\lceil q/2 \rceil$ and $A^c$, and $|\supp_q(\bbeta)|>2$.}

 Since $A, A^c \in \supp_q(\bbeta)$ and $|\supp_q(\bbeta)|>2$, then any other set in $\supp_q(\bbeta)$ must intersect $A$ or $A^c$. Recall, by Observation \ref{obs}, we have
$$
|\supp_q(\bbeta)|> \displaystyle \lceil 1/(1-2\gamma) \rceil +1=3,
$$ 
since $\lceil 1/(1-2\gamma)\rceil=2$ for $\gamma$ sufficiently close to $1/4$. One set in $\supp_q(\bbeta)$ must be the union of precisely two other set in $\supp_q(\bbeta)$ by Theorem \ref{JackTheRipper2}. Thus, the only possibility is that
$$ \supp_q(\bbeta)=\{A, A^c, A_1, A_2\}, $$
where $A_1$ and $A_2$ are disjoint sets which form a $2$-partition of $A$ or $A^c$.\\

Without loss of generality, assume that $A_1$ and $A_2$ form a $2$-partition of $A$. Then the entries of $\bbeta$ are subject to the following constraints
\begin{equation}\label{case4cons1}
\beta_{A}+\beta_{A^c}+\beta_{A_1}+\beta_{A_2}=1,
\end{equation}
\begin{equation}\label{case4cons2}
\beta_A^2+\beta_{A^c}^2+\beta_{A_1}^2+\beta_{A_2}^2+2\beta_A(\beta_{A_1}+\beta_{A_2}) \leq 1-2\gamma,
\end{equation}
\begin{equation}\label{case4cons3}
\beta_{A}+\beta_{A^c} \geq \psi(q),
\end{equation}
where $\beta_{A} > 0$, $\beta_{A^c} > 0$, $\beta_{A_1}>0$, and $\beta_{A_2}> 0$. We can combine (\ref{case4cons2}) and (\ref{case4cons2}) by substituting  $\beta_{A^c}=1-\beta_A-\beta_{A_1}-\beta_{A_2}$ into (\ref{case4cons2}) to obtain
\begin{align} \label{case4cons2b}
1-2\gamma &\geq \beta_A^2+\beta_{A^c}^2+\beta_{A_1}^2+\beta_{A_2}^2+2\beta_A(\beta_{A_1}+\beta_{A_2}) \nonumber \\
                 & =\frac{1}{2}(2\beta_A+2\beta_{A_1}+2\beta_{A_2}-1)^2+\frac{1}{2}-\beta_{A_1}^2-\beta_{A_2}^2-2\beta_{A_1}\beta_{A_2}.
\end{align}
By solving for $\beta_A$ in (\ref{case4cons2b}) we obtain
$$
\beta_{A} \leq \frac{\sqrt{1-4\gamma+2(\beta_{A_1}^2+\beta_{A_2}^2+2\beta_{A_1}\beta_{A_2})}+1}{2}-(\beta_{A_1}+\beta_{A_2}).
$$
Let $t=\beta_{A_1}+\beta_{A_2}$. Then $0 < t \leq 1-\psi(q)$ by (\ref{case4cons1}) and (\ref{case4cons3}). Note that 
$$
\beta_{A_1}^2+\beta_{A_2}^2+2\beta_{A_1}\beta_{A_2} \leq t^2,
$$
and hence, we have
$$ 
\beta_{A} \leq \frac{\sqrt{1-4\gamma+2t^2}+1}{2}-t.
$$
Thus,
\begin{align*}
\OBJ_q(\bbeta) &\leq \log\paren{\lfloor q/2 \rfloor}(\beta_{A^c}+\beta_{A_1}+\beta_{A_2})+\log\paren{\lceil q/2 \rceil}\beta_{A}\\
		       &=\log\paren{\frac{q-1}{2}}+\log\paren{\frac{\lceil q/2 \rceil}{\lfloor q/2 \rfloor}}\beta_{A}, \quad \text{ by (\ref{case4cons1})} \\
		       & \leq\log\paren{\frac{q-1}{2}}+\log\paren{\frac{\lceil q/2 \rceil}{\lfloor q/2 \rfloor}}\paren{\frac{\sqrt{1-4\gamma+2t^2}+1}{2}-t}=: \rho(t,q,\gamma).
\end{align*}
We will maximize $\rho(t,q, \gamma)$ with respect to $t$. Note that $\sqrt{1-4\gamma+2t^2}\geq 0$ for $\gamma \leq 1/4$, and hence,
$$
\frac{d\rho}{dt}=  \frac{t}{\sqrt{1-4\gamma+2t^2}}+1 < 0
$$
for $t >\sqrt{4\gamma -1}$.
Since $4\gamma -1\leq 0<t$, then $\rho(t,q,\gamma)$ is always decreasing with respect to $t$ on $[0, 1-\psi(q)]$. Thus,
\[ 
\OBJ_q(\bbeta) < \rho(0,q,\gamma)=\frac{1}{2}\log\paren{\lceil q/2 \rceil \cdot \lfloor q/2 \rfloor}+\frac{\sqrt{1-4\gamma}}{2}\log\paren{\frac{\lceil q/2 \rceil}{\lfloor q/2 \rfloor}}=\OBJ_q(\balpha),
\]
which contradicts (\ref{mainineq1}). Therefore, this case is impossible.

\noindent \textbf{Case 5:} \textit{The support of $\bbeta$ contains exactly one set $A$ of size $\lceil q/2 \rceil$ and $A^c$ only.}

For any $\gamma \leq 1/4$, define the roots of the equation $x(1-x)= \gamma$ by
\[ 
M(\gamma)^+=\frac{1+\sqrt{1-4\gamma}}{2} \quad \text{ and } \quad  M(\gamma)^-=\frac{1-\sqrt{1-4\gamma}}{2}. 
\]

In this case, we have $\supp_q(\bbeta)=\{A, A^c\}$. Then by (\ref{Vconstraint0}), we have
$$ \beta_A+\beta_{A^c}=1,$$
and we also have
$$ \E_q(\bbeta)=\beta_A\beta_{A^c}= \beta_A(1-\beta_A) \geq\gamma,$$
which implies that $M(\gamma)^-\leq \beta_A \leq M(\gamma)^+$. Similarly, we can also show that $M(\gamma)^-\leq \beta_{A^c} \leq M(\gamma)^+$.
Therefore, it is clear that in this case, we have
\[
\OBJ_q(\bbeta)\leq \log\paren{\frac{q-1}{2}} M(\gamma)^-+\log\paren{\frac{q+1}{2}} M(\gamma)^+=\OBJ_q(\balpha),\]
where equality holds everywhere if and only if $\beta_{A^c}=M(\gamma)^-$ and $\beta_A=M(\gamma)^+$. That is, equality holds if and only if $\bbeta$ is of the same form as $\balpha$ in the statement of Theorem \ref{Opt5}.

We have considered all possible cases for the solution vector $\bbeta$ and have shown that the only possibility is that $\bbeta$ must be of the same form as the vector $\balpha$ defined in the statement of Theorem \ref{Opt5}. Our proof is complete.
\end{proof}

\begin{corollary}\label{G_graph_is_Turan}
Let $n$ be a positive integer. If $\balpha$ solves $\OPT_q(1/4)$ for $q \geq 5$, then the graph $G_{\balpha}(n)$ is isomorphic to $T_2(n)$.
\end{corollary}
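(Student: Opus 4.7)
The plan is to deduce this corollary as an essentially immediate consequence of Theorem \ref{Opt5}, with the remaining work being a careful unpacking of Construction $G_\balpha(n)$. Since the optimization problem is solved in Theorem \ref{Opt5}, I expect no substantive difficulty; the only thing to check is that the resulting construction, despite possibly not being unique, produces a graph isomorphic to $T_2(n)$.

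First I would apply Theorem \ref{Opt5} at the specific value $\gamma = 1/4$ (which lies in the range for which Theorem \ref{Opt5} is asserted, as it is the endpoint of the closed interval $[1/4-\epsilon, 1/4]$). Since $\sqrt{1-4\gamma} = 0$, the theorem forces $\supp_q(\balpha) = \{A, A^c\}$ for some $A \subseteq [q]$ with $|A| = \lceil q/2 \rceil$, and
\[
\alpha_A = \frac{1+\sqrt{1-4\gamma}}{2} = \frac{1}{2}, \qquad \alpha_{A^c} = 1 - \alpha_A = \frac{1}{2}.
\]

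Next I would unpack Construction $G_\balpha(n)$. Because $\alpha_S = 0$ for every nonempty $S$ outside $\{A, A^c\}$, the requirement that $|V_S|$ differ from $n\alpha_S$ by less than $1$ forces $V_S = \emptyset$ for all such $S$. For $S \in \{A, A^c\}$ we have $n\alpha_S = n/2$, so each of $|V_A|, |V_{A^c}|$ must lie within distance $1$ of $n/2$, while $|V_A| + |V_{A^c}| = n$. This pins down $\{|V_A|, |V_{A^c}|\} = \{\lfloor n/2 \rfloor, \lceil n/2 \rceil\}$. The construction adds edges only between $V_S$ and $V_T$ with $S \cap T = \emptyset$; among nonempty parts this occurs only for $\{A, A^c\}$, so $G_\balpha(n)$ is the complete bipartite graph $K_{\lfloor n/2 \rfloor, \lceil n/2 \rceil}$, which is by definition $T_2(n)$.

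The only small subtlety is that when $n$ is odd Construction $G_\balpha(n)$ does not specify which of $V_A, V_{A^c}$ has size $\lceil n/2 \rceil$, but since the two vertex classes play symmetric roles the resulting graph is the same up to isomorphism, which is exactly the assertion of the corollary. So the proof reduces to citing Theorem \ref{Opt5} and reading off the construction.
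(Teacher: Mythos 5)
Your proposal is correct and follows essentially the same route as the paper's proof: apply Theorem \ref{Opt5} at $\gamma = 1/4$ to pin down $\supp_q(\balpha) = \{A, A^c\}$ with $\alpha_A = \alpha_{A^c} = 1/2$, then read off Construction $G_{\balpha}(n)$ to see that the graph is $K_{\lfloor n/2 \rfloor, \lceil n/2 \rceil} = T_2(n)$. You are slightly more explicit than the paper about why the remaining $V_S$ are empty and about the harmless ambiguity in which part gets the larger size, but these are just fuller expositions of the same argument.
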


\begin{proof}
If $\balpha$ is a solution of $\OPT_q(1/4)$ then according to Theorem \ref{Opt5}, $\supp_q(\balpha)=\set{A, A^c}$, where $A\subseteq [q]$ such that $\abs{A}=\lceil q/2 \rceil$, and $\alpha_A=\alpha_{A^c}=1/2$. Then by Construction $\bm{G_{\balpha}(n)}$, the graph $G_{\balpha}(n)$ is a complete $2$-partite graph with parts $V_A$ and $V_{A^c}$, where $\abs{V_A}=\lfloor n/2\rfloor$ and $\abs{V_{A^c}}=\lceil n/2 \rceil$, or vice versa. Therefore, $G_{\balpha}(n) \cong K_{\lfloor n/2\rfloor,\lceil n/2 \rceil}$, which is isomorphic to $T_2(n)$.  
\end{proof}


\section{Approximate Version of Theorem 1}\label{four}
This section is dedicated to proving an ``approximate" version of Theorem \ref{main_theorem}. This version is nearly the same as Theorem \ref{main_theorem}, but has an additional requirement: namely, that a $(n, t_2(n))$-graph $G$ must be $\delta n^2$-close to $T_2(n)$ for sufficiently small $\delta>0$. That is, $T_2(n)$ ``locally maximizes" the number of $q$-colorings among the class of $(n, t_2(n))$-graphs for odd $q\geq 5$.

The main result of this section is as follows.

\begin{theorem}\label{local_theorem}
There exists a $\delta>0$ such that the following holds for sufficiently large $n$. Let $q\geq 2$ be an odd integer and let $G$ be a $(n, t_2(n))$-graph such that $G$ is $\delta n^2$-close to $T_2(n)$. Then $G$ has at most as many $q$-colorings as $T_2(n)$, with equality holding if and only if $G$ is isomorphic to $T_2(n)$.  
\end{theorem}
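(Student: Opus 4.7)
My approach is to use the closeness hypothesis to extract a near-canonical bipartition of $V(G)$ and then compare proper $q$-colorings of $G$ directly with those of the associated complete bipartite graph, classifying them by whether they respect the ``disjoint-palette'' structure of $T_2(n)$. Write $T=T_2(n)$ with bipartition $V(T)=A_0\cup B_0$, $|A_0|=\lfloor n/2\rfloor$, $|B_0|=\lceil n/2\rceil$, and identify $V(G)=V(T)$. Since $G$ is $\delta n^2$-close to $T$, there is a bipartition $(A,B)$ of $V(G)$ with $|A|=\lfloor n/2\rfloor$ and $|B|=\lceil n/2\rceil$ such that the number $k$ of edges of $G$ with both endpoints in $A$ or in $B$ satisfies $k\le\delta n^2$. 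Setting $N:=E(G)\setminus E(K_{A,B})$ and $M:=E(K_{A,B})\setminus E(G)$, the identity $e(G)=t_2(n)=|E(K_{A,B})|$ forces $|N|=|M|=k$. If $k=0$, then $G\subseteq K_{A,B}$ and the edge-count equality forces $G=K_{A,B}\cong T_2(n)$, giving the equality case. Henceforth assume $k\ge 1$; the goal is to prove $P_G(q)<P_{K_{A,B}}(q)=P_{T_2(n)}(q)$.

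\textbf{The main comparison.} Partition the set $\mathcal{C}(G)$ of proper $q$-colorings of $G$ into $\mathcal{C}_{=}:=\{f\in\mathcal{C}(G):f(A)\cap f(B)=\emptyset\}$ and $\mathcal{C}_{\ne}:=\mathcal{C}(G)\setminus\mathcal{C}_{=}$. The colorings in $\mathcal{C}_{=}$ are precisely the proper colorings of $K_{A,B}\cup N$. Picking any $e=uv\in N$, the deletion--contraction identity yields $P_{K_{A,B}\cup\{e\}}(q)=P_{K_{A,B}}(q)-P_{K_{A,B}/e}(q)$, with $K_{A,B}/e$ isomorphic to $K_{|A|-1,|B|}$ (or $K_{|A|,|B|-1}$). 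Standard asymptotic estimates of the chromatic polynomial of a near-balanced complete bipartite graph give $P_{K_{|A|-1,|B|}}(q)/P_{K_{A,B}}(q)\to c_1(q):=2q/(q^2-1)>0$ as $n\to\infty$, so
\[|\mathcal{C}_{=}|\le P_{K_{A,B}\cup\{e\}}(q)\le(1-c_1(q)+o(1))P_{K_{A,B}}(q).\]
For $\mathcal{C}_{\ne}$, every $f$ admits a triple $(u,v,c)$ with $u\in A$, $v\in B$, $f(u)=f(v)=c$, and properness forces $uv\in M$. A union bound over such pairs gives $|\mathcal{C}_{\ne}|\le\sum_{uv\in M}P_{G/uv}(q)$. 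For each $uv\in M$, the merged vertex in $G/uv$ is adjacent to all but $O(\delta n)$ of the other $n-2$ vertices (since $G$ is $\delta n^2$-close to $K_{A,B}$), so $G/uv$ is a near-complete bipartite graph on $n-2$ vertices with a nearly universal extra vertex. Conditioning on that vertex's color reduces the problem to bounding the chromatic polynomial of a near-balanced bipartite graph at argument $q-1$; the key asymptotic identity, valid for all odd $q\ge 3$,
\[\frac{P_{K_{a-1,b-1}}(q-1)}{P_{K_{a,b}}(q)}=O\!\left(\!\left(\tfrac{q-1}{q+1}\right)^{\!n/2}\right),\]
then gives $P_{G/uv}(q)\le C(\delta)^n\cdot\rho_0^{n/2}\cdot P_{K_{A,B}}(q)$ where $\rho_0:=(q-1)/(q+1)<1$ and $C(\delta)\to 1$ as $\delta\to 0$. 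For $\delta$ sufficiently small that $C(\delta)^2\rho_0<1$, summing over $k$ terms gives $|\mathcal{C}_{\ne}|\le k\cdot\rho^n\cdot P_{K_{A,B}}(q)=o(P_{K_{A,B}}(q))$ for some $\rho=\rho(q,\delta)<1$. Combining,
\[P_G(q)\le(1-c_1(q)+o(1))\,P_{K_{A,B}}(q)<P_{T_2(n)}(q)\]
for all $n$ sufficiently large, proving strict inequality when $k\ge 1$.

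\textbf{Main obstacle.} The principal technical difficulty is the upper bound on $P_{G/uv}(q)$ for $uv\in M$: the contracted vertex may miss $O(\delta n)$ of its potential neighbors, so in principle its chromatic polynomial could be inflated by a factor like $(q/(q-1))^{O(\delta n)}$. One must verify that this degree deficit, captured in the factor $C(\delta)^n$ above, does not overwhelm the exponential gap $\sqrt{(q-1)/(q+1)}<1$ coming from the palette-size reduction when passing from $q$-colorings of $K_{a,b}$ to $(q-1)$-colorings of $K_{a-1,b-1}$. This is exactly where the quantitative hypothesis that $\delta$ be sufficiently small enters the proof: one needs $\delta$ small enough (as a function of $q$) that $C(\delta)\cdot\sqrt{\rho_0}<1$, guaranteeing the geometric decay survives and the lower-order $\mathcal{C}_{\ne}$ contribution cannot compensate for the constant-fraction loss $c_1(q)$ on the $\mathcal{C}_{=}$ side.
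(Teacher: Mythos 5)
Your decomposition of $P_G(q)$ into $\mathcal{C}_{=}$ (disjoint-palette colorings) and $\mathcal{C}_{\ne}$ is a genuinely different route from the paper's. The paper follows Norine's "essential colors" scheme: it fixes $\epsilon=\sqrt{\delta}$, calls a vertex \emph{good} if it is adjacent to at least a $(1-\epsilon)$-fraction of the opposite part and \emph{bad} otherwise (the set $B$ of bad vertices satisfies $|B|\le 2\epsilon n$), records for each coloring $f$ the sets $\R_f(i)$ of colors occurring on more than $\epsilon|A_i|$ vertices of $A_i$, shows $\R_f(1),\R_f(2)$ are nonempty and disjoint, and then bounds the number of colorings per palette pair $(\R_1,\R_2)$ in two cases depending on whether $(|\R_1|,|\R_2|)$ is the near-balanced pair. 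Your $\mathcal{C}_{=}$ bound is essentially sound and quite clean: $\mathcal{C}_{=}$ really is the set of proper colorings of $K_{A,B}\cup N$, and for any $e\in N$, deletion--contraction gives $|\mathcal{C}_{=}|\le P_{K_{A,B}}(q)-P_{K_{|A|-1,|B|}}(q)$, which costs a positive constant fraction.

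The gap is in the $\mathcal{C}_{\ne}$ bound. The union bound $|\mathcal{C}_{\ne}|\le\sum_{uv\in M}P_{G/uv}(q)$ is correct, but the claimed termwise estimate $P_{G/uv}(q)\le C(\delta)^n\rho_0^{n/2}P_{K_{A,B}}(q)$ is not, and the premise you use to justify it --- ``the merged vertex is adjacent to all but $O(\delta n)$ of the other $n-2$ vertices'' --- is false. The hypothesis controls only the \emph{total} cross-degree deficit, $\sum_{u'\in A}d_{u'}=|M|\le\delta n^2/2$; it does \emph{not} give a per-vertex bound of $O(\delta n)$. A single vertex $u\in A$ can be non-adjacent to every vertex of $B$ (degree deficit $\approx n/2$) while $G$ is still $\delta n^2$-close to $T_2(n)$ for fixed $\delta$ and $n$ large. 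In that case every pair $uv$ with $v\in B$ lies in $M$, the contraction $G/uv$ is essentially $G-u$, and $P_{G-u}(q)$ is a positive constant fraction of $P_{K_{A,B}}(q)$ rather than an exponentially small one; the union bound then yields roughly $\frac{n}{2}\cdot\Theta(1)\cdot P_{K_{A,B}}(q)$, which is useless. Shrinking $\delta$ does not repair this, since the offending deficit is $\Theta(n)$ independent of $\delta$. This is exactly the phenomenon the paper handles through the good/bad vertex dichotomy: bad vertices are colored arbitrarily (a $q^{|B|}\le q^{2\sqrt{\delta}n}$ factor that can be absorbed), and the delicate analysis is carried out on good vertices only, where high degree to the opposite part is guaranteed. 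To make your argument work you would need some analogue of that preprocessing --- either separate out the colorings assigning a ``repeated'' color to a low-degree vertex, or first argue that a putative counterexample has no such vertices --- before the union bound over $M$ can close.
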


Intuitively, Theorem \ref{local_theorem} states that if a graph is ``close", with respect to edit distance, in structure to $T_2(n)$ then the number of its $q$-colorings is at most $P_{T_2(n)}(q)$ for odd $q \geq 5$.

The following four results will be referenced throughout the proof of Theorem \ref{local_theorem}. The first result, Lemma \ref{partitionlemma}, states the existence of a particular partition of the vertex set of a $(n, t_2(n))$-graph that is $\delta n^2$-close $T_2(n)$.


\begin{lemma} \label{partitionlemma}
Let $\delta>0$. If $G$ is a $(n, t_2(n))$-graph that is $\delta n^2$-close to $T_2(n)$ then there exists a partition $A_1\cupdot A_2=V(G)$ such that $e(G[A_1, A_2])\geq t_2(n)-\frac{\delta}{2}n^2$.
\end{lemma}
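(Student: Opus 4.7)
The plan is to unwrap the definition of ``$\delta n^2$-close'' and read the partition off directly from the witness. By hypothesis there is a graph $G'$ on the vertex set $V(G)$ that is isomorphic to $T_2(n)$ and satisfies $|E(G)\triangle E(G')|\leq \delta n^2$. Take $A_1\cupdot A_2=V(G)$ to be the two Tur\'an classes of $G'$, so that $E(G')$ consists of exactly all pairs with one endpoint in each class, $|A_1|=\lfloor n/2\rfloor$, $|A_2|=\lceil n/2\rceil$, and $|E(G')|=t_2(n)$.

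Next I would use that $|E(G)|=|E(G')|=t_2(n)$ to conclude $|E(G)\setminus E(G')|=|E(G')\setminus E(G)|$; call this common value $k$. Then $|E(G)\triangle E(G')|=2k\leq \delta n^2$, so $k\leq \tfrac{\delta}{2}n^2$. Since $G'$ is the complete bipartite graph between $A_1$ and $A_2$, any edge of $G$ with one endpoint in $A_1$ and the other in $A_2$ automatically lies in $E(G')$, hence in $E(G)\cap E(G')$. Therefore
\[
e(G[A_1,A_2])=|E(G)\cap E(G')|=|E(G')|-|E(G')\setminus E(G)|=t_2(n)-k\geq t_2(n)-\frac{\delta}{2}n^2,
\]
which is the desired inequality.

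The argument is essentially bookkeeping on the definition of edit distance, so there is no serious obstacle. The only point requiring care is that the partition must be read off from the specific labeled copy $G'$ of $T_2(n)$ that witnesses the edit-distance bound, rather than from the abstract Tur\'an graph; and one must exploit the equality $|E(G)|=|E(G')|$ to convert the symmetric-difference bound $\delta n^2$ into the sharper one-sided bound $\tfrac{\delta}{2}n^2$ that the statement requires.
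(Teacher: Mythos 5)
Your proof is correct and takes essentially the same approach as the paper's: both read the partition off the witness copy of $T_2(n)$ and then do bookkeeping with the symmetric difference to convert the edit-distance bound into the claimed one-sided bound on $e(G[A_1,A_2])$. The only cosmetic difference is that you use $|E(G)|=|E(G')|$ to split the symmetric difference evenly, while the paper manipulates $|E(G)\cap E(T_2(n))| = |E(G)\cup E(T_2(n))| - |E(G)\triangle E(T_2(n))|$; both yield the factor of $1/2$ identically.
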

\begin{proof}
Suppose that $T_2(n)$ has vertex set partition $A_1\cupdot A_2=V(T_2(n))$. Since $G$ is $\delta n^2$-close to $T_2(n)$, then by definition, $V(G)=V(T_2(n))$. We claim that $$e(G[A_1, A_2])\geq t_2(n)-\frac{\delta}{2}n^2.$$

First observe that
\begin{equation} \label{loclem_1_ineq}
E(G[A_1, A_2])=E(G)\cap E(T_2(n))=(E(G)\cup E(T_2(n)))\setminus (E(G)\triangle E(T_2(n))),
\end{equation}
where the first equality holds since $T_2(n)$ has all possible edges between the parts $A_1$ and $A_2$. By taking the cardinalities of each set in (\ref{loclem_1_ineq}) we have
\[
e(G[A_1, A_2]) \geq 2t_2(n)-e(G[A_1,A_2])-\delta n^2.
\]
Thus, we have our desired result.
\end{proof}

We will need the following upper bound on $P_G(q)$ from \cite{FL1} later. 
\begin{lemma}[\cite{FL1}] \label{FLBound}
Let $G$ be an $(n,m)$-graph and let $q \geq 2$ be an integer. Then
$$ P_G(q) \leq \paren{1-\frac{1}{q}}^{\lceil (\sqrt{1+8m}-1)/2} q^n \leq \paren{1-\frac{1}{q}}^{\lceil (\sqrt{m}-1)/2} q^n.  $$
\end{lemma}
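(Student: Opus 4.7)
The plan is to bound $P_G(q)$ from above by the chromatic polynomial of a spanning forest of $G$, and then control the number of components of that forest in terms of $m$ by a short convexity argument.

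First, I would invoke the elementary monotonicity of the chromatic polynomial under edge deletion: if $H$ is a spanning subgraph of $G$, then every proper $q$-coloring of $G$ is a proper $q$-coloring of $H$, so $P_G(q) \leq P_H(q)$. Let $F$ be any spanning forest of $G$, and let $c$ denote the number of connected components of $G$ (equivalently, of $F$), with component vertex counts $n_1, \ldots, n_c$ summing to $n$. Since a tree on $n_i$ vertices has chromatic polynomial $q(q-1)^{n_i-1}$,
\[
P_G(q) \leq P_F(q) = \prod_{i=1}^c q(q-1)^{n_i-1} = q^c(q-1)^{n-c} = q^n \paren{1-\frac{1}{q}}^{n-c}.
\]

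Next, I would show $n - c \geq \lceil (\sqrt{1+8m}-1)/2 \rceil$. Since the $i$th component has at most $\binom{n_i}{2}$ edges, $m \leq \sum_{i=1}^c \binom{n_i}{2}$. Because $\binom{x}{2}$ is convex, the right-hand sum is maximized, subject to $n_i \geq 1$ and $\sum n_i = n$, by concentrating all mass in a single part, giving $\sum_{i=1}^c \binom{n_i}{2} \leq \binom{n-c+1}{2}$. Thus $m \leq \binom{n-c+1}{2}$, and solving $(n-c+1)(n-c) \geq 2m$ for $n-c$ yields $n - c \geq (\sqrt{1+8m}-1)/2$; since $n - c$ is an integer, this sharpens to $n - c \geq \lceil (\sqrt{1+8m}-1)/2 \rceil$.

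Finally, since $0 \leq 1 - 1/q < 1$, raising it to a larger exponent only shrinks the result, so combining the two bounds above gives the first inequality of the lemma. The second inequality follows because $\sqrt{1+8m} \geq \sqrt{m}$, hence $\lceil (\sqrt{1+8m}-1)/2 \rceil \geq \lceil (\sqrt{m}-1)/2 \rceil$, and the exponent inequality again reverses when applied to $1-1/q$. I do not anticipate any real obstacle in this plan; the only substantive ingredient is the convexity bound on $\sum \binom{n_i}{2}$, which is a one-line calculation.
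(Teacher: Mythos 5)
Your proof is correct. The paper itself states this lemma as a cited result from Lazebnik (reference [FL1]) and does not reproduce a proof, so there is no in-paper argument to compare against; the route you take — bounding $P_G(q)$ by the chromatic polynomial $q^c(q-1)^{n-c}$ of a maximal spanning forest and then using the convexity estimate $m \le \binom{n-c+1}{2}$ to deduce $n-c \ge \left\lceil (\sqrt{1+8m}-1)/2 \right\rceil$ — is the natural argument for a bound of this form, and every step (the spanning-forest reduction, the majorization/convexity bound, the integrality sharpening, the monotonicity of $t\mapsto(1-1/q)^t$, and the comparison $\sqrt{1+8m}\ge\sqrt{m}$) is valid as written.
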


We will now embark on the proof of Theorem \ref{local_theorem}. Our approach is very similar to the one used by Norine \cite{Norine} in his proof of Lemma 4.2, with some adjustments, since $q$ is not an even integer.

\begin{proof}[Proof of Theorem \ref{local_theorem}]
Throughout the proof we will be making a series of claims which hold for positive $\delta$, sufficiently small as a function of $q$, and for $n$, sufficiently large as a function of $q$ and $\delta$. The eventual choice of $\delta$ and $n$ will be implicitly made so that all of our claims are valid.

Let $G$ be a $(n, t_2(n))$-graph, where $n \geq 2$. Suppose, to the contrary, that $G$ has more $q$-colorings than $T_2(n)$. By Lemma  \ref{partitionlemma}, there exists a partition $A_1\cupdot A_2=V(G)$ such that $e(G[A_1,A_2])\geq t_2(n)-\frac{\delta}{2}n^2$. Assume that the pair $(A_1, A_2)$ is chosen to maximize $e(G[A_1,A_2])$. Let $\delta ':=2n^{-1} \abs{|A_1|-\frac{n}{2}}=2n^{-1} \abs{|A_2|-\frac{n}{2}}$. Then
\[
e(G[A_1, A_2])  \leq \paren{\frac{n}{2}}^2-\paren{\frac{n\delta'}{2}}^2 \leq t_2(n)+\frac{1}{4}-\frac{1}{2}\paren{\frac{n\delta'}{2}}^2.
\]
Then for sufficiently small $\delta$ and sufficiently large $n$, $(\delta'n/2)^2\leq 1/2+\delta n^2\leq (q/2)\delta n^2$, and thus, $\delta' \leq \sqrt{(q/2)\delta}$. Therefore, it suffices to show that the conclusion of the lemma holds as long as not only $\delta$, but $\max\set{\delta, \delta'}$ is sufficiently small. To simplify the notation at the expense of overloading it, we will use $\delta$ in the remainder of the proof to denote $\max\set{\delta, \delta'}$. In particular, we have $\abs{|A_i|-n/2}\leq (\delta n) / 2$ for all $i \in [2]$ for sufficiently small $\delta$.

Let $\epsilon:=\sqrt{\delta}$. We say that a vertex $v \in V(G)$ is \textit{good} if $d_{A_i}(v)\geq (1-\epsilon)\abs{A_i}$ for the $i \in [2]$ such that $v \notin A_i$; that is, $v$ has ``many" neighbors in the part $A_i$ that does not contain $v$. Otherwise, we say that $v$ is \textit{bad}. Let $B$ denote the set of bad vertices of $G$. By counting the edges in $\overline{G}[A_1, A_2]$, where $\overline{G}$ denotes the complementary graph of $G$, we obtain
\[
\epsilon \paren{1-\delta}\frac{n}{2} \abs{B}  \leq e(\overline{G}[A_1,A_2]) =t_2(n)-e(G[A_1,A_2])\leq \frac{\delta}{2}n^2,
\]
and hence, $\abs{B} \leq \frac{\epsilon}{1-2\epsilon^2}n\leq 2\epsilon n$ for sufficiently small $\epsilon$ (and hence, $\delta$).

Let $f:V(G)\to [q]$ be a $q$-colorings of $G$. For each $i \in [2]$ define 
$$ \R_f(i):=\set{c \in [q]: \abs{f^{-1}(c)\cap A_i}> \epsilon \abs{A_i}}, $$
i.e., $\R_f(i)$ is the set of colors which occur relatively frequently in $A_i$ under the coloring $f$. We say that each color in $\R_f(i)$ is an \textit{essential color in $A_i$}.

We make two observations about $\R_f(i)$. First, note that we can ensure that $\R_f(i) \neq \emptyset$ for each $i \in [2]$ by ensuring that $\abs{A_i} = \sum_{c \in [q]} \abs{f^{-1}(c)\cap A_i}>q\epsilon \abs{A_i}$ by choosing $\epsilon <1/q$. Secondly, the sets $\R_f(1)$ and $\R_f(2)$ are disjoint. Note that for every essential color $c \in \R_f(i)$ we have $f^{-1}(c)\subseteq A_i \cup B$. Otherwise, if say, $i=1$, and there were some $v \in A_2 \setminus B$ such that $f(v)=c$, then since $d_{A_1}(v)\geq (1-\epsilon)\abs{A_1}$, the vertex $v$ must be adjacent to some vertex in $A_1$ with color $c$, contradicting that $f$ is a $q$-coloring. Therefore, we see that there exists at least one essential color in each $A_i$, and that $A_1$ and $A_2$ cannot share essential colors.

Let us define the vector $\Rbold_f:=(\R_f(1), \R_f(2))$. Given another vector $\Rbold=(\R_1, \R_2)$ such that the components $\R_1$ and $\R_2$ are disjoint nonempty subsets of $[q]$, let
$$ \P_G(\Rbold):=\abs{\set{f:V(G)\to [q] : \R_i \text{ is the set of essential colors in } A_i \text{ for each } i \in[2]}}. $$
We will bound $\P_G(\Rbold)$ in two distinct cases, each of which is based upon a comparison of the cardinalities of the sets $\R_1$ and $\R_2$.


\noindent \textbf{Case 1:} The components of $\Rbold=(\R_1, \R_2)$ that satisfy $$(\abs{\R_1}, \abs{\R_2}) \notin \set{\paren{\lfloor q/2 \rfloor,\lceil q/2 \rceil}, \paren{\lceil q/2 \rceil, \lfloor q/2 \rfloor}}.$$

We can estimate $\P_G(\Rbold)$ in the following way: (i) Allow the vertices of $B$ to be colored arbitrarily, (ii) allow $\abs{R_i}$ choices of colors for each of the vertices in $A_i$, (iii) account for the number of subsets of $A_i$ which will \textit{not} be colored with any of the $\abs{\R_i}$ essential colors, and (iv) color the subset of $A_i$ chosen in (iii). By estimating $\P_G(\Rbold)$ this way we obtain
\begin{align*}
\P_G(\Rbold) & \leq q^{\abs{B}} \paren{\prod_{i=1}^2 \abs{\R_i}^{\abs{A_i}}\cdot 2 \binom{\abs{A_i}}{(q-\abs{\R_i})\epsilon \abs{A_i}}\cdot (q-\abs{\R_i})^{(q-\abs{\R_i})\epsilon \abs{A_i}}}\\
				   &\leq 4\cdot q^{2\epsilon n} \cdot \paren{(\lceil q/2 \rceil+1)(\lfloor q/2 \rfloor -1)}^{(1+\delta)\frac{n}{2}} \paren{e/\epsilon}^{q\epsilon n}\\	   
				   &= 4 \paren{(\lceil q/2 \rceil+1)(\lfloor q/2 \rfloor -1)}^{n/2} \cdot \exp\paren{\paren{\frac{\delta}{2} \log\paren{\frac{q^2-9}{4}}+2\epsilon \log(q)+q\epsilon \log\paren{e/\epsilon}}n}\\
				   &<\frac{1}{3^q}\paren{\lceil q/2 \rceil \cdot \lfloor q/2 \rfloor}^{(n-2)/2},
\end{align*}
for $\epsilon$ (and hence, $\delta$) sufficiently small and $n$ sufficiently large, since 
$$\exp\paren{\paren{\frac{\delta}{2} \log\paren{\frac{q^2-9}{4}}+2\epsilon \log(q)+q\epsilon \log\paren{e/\epsilon}}n} \to 1
$$
as $\epsilon=\sqrt{\delta} \to 0$. It follows that
\begin{equation} \label{PGBound1}
\sum_{\Rbold} \P_G(\Rbold) < \sum_{\Rbold} \frac{1}{3^q}\paren{\lceil q/2 \rceil \cdot \lfloor q/2 \rfloor}^{(n-2)/2}=\paren{\lceil q/2 \rceil \cdot \lfloor q/2 \rfloor}^{(n-2)/2},
\end{equation}
where the summation is taken over all $\Rbold=(\R_1, \R_2)$ such that $$(\abs{\R_1}, \abs{\R_2}) \notin \set{(\lfloor q/2 \rfloor,\lceil q/2 \rceil), (\lceil q/2 \rceil, \lfloor q/2 \rfloor)}.$$ 

\noindent \textbf{Case 2:} The components of $\Rbold=(\R_1, \R_2)$ that satisfy $$(\abs{\R_1}, \abs{\R_2}) \in \set{\paren{\lfloor q/2 \rfloor,\lceil q/2 \rceil}, \paren{\lceil q/2 \rceil, \lfloor q/2 \rfloor}}.$$

In this case, we will bound $\P_G(\Rbold)$ when $\Rbold$ corresponds to a partition of $[q]$ into two parts, one of which is roughly of size $q/2$. Note that under any such $q$-coloring $f$, all of the vertices in $A_i\setminus B$ are only colored with colors from $\R_f(i)$. Otherwise, if there is a $q$-coloring $f$ such that $\Rbold_f=\Rbold$ and we have, for example, a vertex $v \in A_1\setminus B$ such that $f(v)=c$ for some $c \in \R_f(2)$, then since $d_{A_2}(v)\geq (1-\epsilon) \abs{A_2}$ and there exist more than $\epsilon k$ vertices with color $c$ in $A_2$, then $v$ would be adjacent to some vertex of color $c$ in $A_2$, a contradiction.

Suppose first that there exists a vertex $v \in V(G)$ such that $d_{A_i}(v) \geq \delta^{2/5}\abs{A_i}$ for every $i \in [2]$. We can estimate $\P_G(\Rbold)$ in the following way: (i) Arbitrarily color the vertices of $B$ with any of the $q$ colors, (ii) if $f(v) \in \R_f(j)$ for some $j \in [2]$ then arbitrarily color the neighbors of $v$ in $A_j \setminus B$ with any of the available $\abs{\R_f(j)}-1$ colors in $\R_f(j)\setminus \set{f(v)}$, (iii) arbitrarily color the vertices in $A_i\setminus B$ which are \textit{not} neighbors of $v$ using any of the colors in $\R_f(j)$, and (iv) arbitrarily color the vertices in $A_{3-j}$ using any of the colors in $\R_f(3-j)$. There are $\abs{\R_f(3-j)}^{A_{3-j}}$ possibilities. By estimating $\P_G(\Rbold)$ this way we obtain
\begin{align*}
\P_G(\Rbold) &\leq 2\cdot q^{\abs{B}} (\abs{\R_f(j)}-1)^{d_{A_j}(v)} \abs{\R_f(j)}^{\abs{A_j}-d_{A_j}(v)} \abs{\R_f(3-j)}^{\abs{A_{3-j}}} \\
		     & \leq 2\cdot q^{2\epsilon n}\paren{\frac{\lceil q/2 \rceil-1}{\lceil q/2 \rceil}}^{\delta^{2/5}(1-\delta)n/2} \paren{\lfloor q/2 \rfloor \lceil q/2 \rceil}^{n/2+\delta n}\\
		     & < \frac{1}{2^q} \paren{\lfloor q/2 \rfloor \lceil q/2 \rceil}^{n/2},
\end{align*}
for sufficiently small $\delta$ and sufficiently large $n$. Combining this with the previous calculations from Case 1 we obtain $
\P_G(\Rbold) < 2\paren{\lfloor q/2 \rfloor \lceil q/2 \rceil}^{n/2}$, which is less than the number of $q$-colorings of $T_2(n)$, a contradiction. Therefore, a vertex $v$ as above does not exist. It follows from the choice of the partition $(A_1, A_2)$ that for every $i \in [2]$ the subgraph $G[A_i]$ of $G$ has maximum degree at most $\delta^{2/5}n$. Let $e_i:=e(G[A_i \setminus B])$ for each $i \in [2]$. Then
$$  
\sum_{i=1}^2 \paren{e_i+\delta^{2/5}n \abs{B\cap A_i}} \geq \sum_{i=1}^2 e(G[A_i]) =e(G)-e(G[A_1, A_2])\geq \epsilon \paren{1-\delta}\frac{n}{2} \abs{B}.
$$
It follows that $e_1+e_2  \geq \delta^{2/5}\abs{B}n$ for sufficiently small $\delta$. Using Lemma \ref{FLBound} we obtain 
\begin{align*}
&\P_G(\Rbold) \nonumber \\ 
		     & \leq q^{\abs{B}} \paren{\lfloor q/2 \rfloor \lceil q/2 \rceil}^{(1+\delta)\frac{n}{2}} \prod_{i=1}^2 \paren{\frac{\lceil q/2 \rceil -1}{\lceil q/2 \rceil}}^{\sqrt{e_i}} \nonumber\\
		     &= \paren{\lceil q/2 \rceil \cdot \lfloor q/2 \rfloor}^{(1+\delta)\frac{n}{2}} \exp\paren{\log(q)\abs{B}-\log\paren{\frac{\lceil q/2 \rceil}{\lfloor q/2 \rfloor}}\sqrt{\delta^{2/5}\abs{B}n}+\log\paren{\lceil q/2 \rceil \cdot \lfloor q/2 \rfloor}\frac{\delta}{2}n} \nonumber\\
		     &\leq \paren{\lceil q/2 \rceil \cdot \lfloor q/2 \rfloor}^{n/2}\exp\paren{\paren{\log(q)-10^{-1/2}\delta^{-1/20}\log\paren{\frac{\lceil q/2 \rceil}{\lfloor q/2 \rfloor}}}\abs{B}+\log\paren{\lceil q/2 \rceil \cdot \lfloor q/2 \rfloor}\frac{\delta}{2}n}. 
\end{align*}
If $\abs{B} \neq 0$ then $\P_G(\Rbold)$ once again becomes negligible compared to $\paren{\lceil q/2 \rceil \cdot \lfloor q/2 \rfloor}^{n/2}$, as $\delta$ approaches $0$. It follows that $\abs{B}=0$. It suffices to assume that $G[A_1, A_2]$ is a complete bipartite graph. Indeed, if this were not the case, then there exist nonadjacent vertices $v_1 \in A_1$ and $v_2 \in A_2$ and adjacent vertices $x$ and $y$, either both in $A_1$ or both in $A_2$. Let the graph $G':=G-xy+v_1v_2$. If $f$ is a $q$-coloring in Case 2, then $f(x) \neq f(y)$ and $f(v_1) \neq f(v_2)$. Thus, we can apply the coloring $f$ to the graph $G'$ to obtain a proper $q$-coloring of $G'$ that satisfies the conditions of Case 2. Therefore, $\P_G(\Rbold)\leq \P_{G'}(\Rbold)$. We may repeat this process until all possible edges between $A_1$ and $A_2$ are present. Then $G \cong K_{\abs{A_1}, \abs{A_2}}$, and since $t_2(n)=e(G)$ then $G$ must be isomorphic to $T_2(n)$. Therefore, we have shown that any $(n, t_2(n))$-graph $G$ which is $\delta n^2$ close to $T_2(n)$ has at most as many $q$-colorings as $T_2(n)$, with equality holding if and only if $G$ is isomorphic to $T_2(n)$, provided that $\delta$ is sufficiently small and $n$ is sufficiently large.
\end{proof}
%
%
%
%
%

\section{Proof of Theorem \ref{main_theorem}}\label{five}

We are ready to prove our main result, Theorem \ref{main_theorem}. The proof combines our solution of \textbf{OPT} for odd $q \geq 5$ and Theorem \ref{local_theorem}, and uses a similar approach to Norine's proof of his main result, Theorem 1.1, in \cite{Norine}. 

\begin{proposition}[\cite{Loh}] \label{Eequality}
If $\balpha \in \FEAS_q(\gamma)$ solves $\OPT_q(\gamma)$, then $\E_q(\balpha)=\gamma.$
\end{proposition}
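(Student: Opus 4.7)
The plan is to prove the contrapositive: if $\E_q(\balpha)>\gamma$ strictly, then $\balpha$ cannot be a maximizer. The key structural observation is that the set $[q]$ itself plays a special dual role in the problem. On one hand, $\log|[q]|=\log q$ is the maximum possible value of $\log|A|$ appearing in $\OBJ_q$; on the other hand, no nonempty $B\subseteq[q]$ is disjoint from $[q]$, so the component $\alpha_{[q]}$ contributes \emph{nothing} to $\E_q(\balpha)$. Thus pushing weight onto $\alpha_{[q]}$ is always good for the objective while never tightening the edge constraint.

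First I would rule out the degenerate possibility that $\alpha_{[q]}=1$ and all other components vanish: in that case $\E_q(\balpha)=0$, which contradicts $\E_q(\balpha)\geq \gamma>0$. Hence there must exist some $A\subsetneq [q]$ with $\alpha_A>0$. Fix such an $A$.

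Next I would construct the perturbation $\balpha'$ from $\balpha$ by setting $\alpha'_A=\alpha_A-\epsilon$ and $\alpha'_{[q]}=\alpha_{[q]}+\epsilon$, leaving every other component unchanged, for some small $\epsilon>0$. Clearly $\V_q(\balpha')=\V_q(\balpha)=1$, and $\balpha'\geq 0$ provided $\epsilon\leq \alpha_A$. A direct computation of $\E_q(\balpha')-\E_q(\balpha)$ shows that the only pairs that change are those of the form $\{A,B\}$ with $A\cap B=\emptyset$ (since any pair involving $[q]$ with a nonempty second set is excluded from the sum), giving
\[
\E_q(\balpha')-\E_q(\balpha)= -\epsilon\!\!\sum_{B:\,A\cap B=\emptyset} \alpha_B \;\leq\; 0.
\]
Because by assumption $\E_q(\balpha)>\gamma$, we can choose $\epsilon>0$ small enough that $\E_q(\balpha')\geq \gamma$ still holds, so $\balpha'\in\FEAS_q(\gamma)$.

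Finally I would compute the change in the objective, which is
\[
\OBJ_q(\balpha')-\OBJ_q(\balpha)=\epsilon\bigl(\log q-\log |A|\bigr)>0,
\]
strictly, because $|A|<q$. This contradicts the assumed optimality of $\balpha$. Hence $\E_q(\balpha)=\gamma$. I do not expect a serious obstacle: the argument is a short local-perturbation argument, and the only subtle point is observing that the edge constraint $\E_q(\balpha)\geq \gamma$ together with $\gamma>0$ guarantees the existence of a proper subset $A\subsetneq[q]$ with $\alpha_A>0$ on which to perform the perturbation.
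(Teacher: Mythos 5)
The paper cites this proposition from \cite{Loh} and does not supply its own proof, so there is nothing internal to compare against; I can only assess the proposal on its merits. Your perturbation argument is correct and self-contained. The two load-bearing observations are exactly right: (i) the constraint $\gamma>0$ together with $\E_q(\balpha)\geq\gamma$ forces some proper subset $A\subsetneq[q]$ to carry positive weight, since $\alpha_{[q]}$ contributes nothing to $\E_q$; and (ii) moving mass $\epsilon$ from $\alpha_A$ to $\alpha_{[q]}$ preserves $\V_q=1$ and nonnegativity, only decreases $\E_q$ (because $[q]$ is disjoint from no nonempty set), and strictly increases $\OBJ_q$ by $\epsilon(\log q-\log|A|)>0$. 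Since $\E_q(\balpha)>\gamma$ gives slack, a sufficiently small $\epsilon>0$ keeps the perturbed vector in $\FEAS_q(\gamma)$, contradicting optimality. This is the natural local-perturbation argument and is essentially the one given in \cite{Loh}; I see no gap.
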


\begin{proof}[Proof of Theorem \ref{main_theorem}]
We proceed by contradiction. Assume there exists an increasing sequence of positive integers $\set{n_i}_{i=1}^\infty$ and a sequence of graphs $\set{H_i}_{i=1}^\infty$ such that $H_i$ is a $(n_i, t_2(n_i))$-graph, $H_i$ is not isomorphic to $T_2(n_i)$, and $H_i$ has at least as many $q$-colorings as any other $(n_i, t_2(n_i))$-graph. Choose $\epsilon>0$ so that a real number $\gamma \in [1/4 -\epsilon, 1/4]$ and the conclusion of Theorem \ref{Opt5} holds. We apply Theorem \ref{LPS2} for $\kappa=1/4$ and a sequence of positive real numbers $\set{\delta_i}_{i=1}^\infty$ with $0<\delta_i\leq \epsilon$ and $\lim_{i\to \infty} \delta_i=0$. By possibly restricting $\set{n_i}_{i=1}^\infty$ to a subsequence, we obtain a sequence $\set{\balpha_i}_{i=1}^\infty$ such that $H_i$ is $\delta_i n_i^2$-close to the graph $G_{\balpha_i}(n_i)$, $\balpha_i$ solves $\OPT_q(\gamma_i)$ for some real number $\gamma_i$ such that $\gamma_i \in [1/4-\epsilon, 1/4]$, and $\lim_{i\to \infty}\gamma_i=1/4$. 

Since $\balpha_i \in \FEAS_q(\gamma_i)$, and $\FEAS_q(\gamma_i)$ is a compact set, we may further restrict our sequence $\set{\balpha_i}_{i=1}^\infty$ (and hence, the sequences $\set{n_i}$ and $\set{\gamma_i}$) by assuming that the $\balpha_i$'s converge in the $L^1$-norm to a vector $\balpha^*$ with $\E_q(\balpha^*)=\gamma^*$.
Then by Theorem \ref{Opt5}, for odd $q \geq 5$, we have
\begin{align*}
&\OBJ_q(\balpha^*)-\frac{1}{2}\log\paren{\lceil q/2 \rceil \cdot \lfloor q/2 \rfloor}\\
&=\lim_{i \to \infty} \left[\OBJ_q(\balpha_i)-\paren{\frac{1}{2}\log\paren{\lceil q/2 \rceil \cdot \lfloor q/2 \rfloor}+\frac{\sqrt{1-4\gamma_i}}{2}\log\paren{\frac{\lceil q/2 \rceil}{\lfloor q/2 \rfloor}}}\right]=0.
\end{align*}
Therefore,
$$
\OBJ_q(\balpha^*)=\frac{1}{2}\log\paren{\lceil q/2 \rceil \cdot \lfloor q/2 \rfloor}.
$$
Since $\balpha_i$ solves $\OPT_q(\gamma_i)$, by Proposition \ref{Eequality}, $\E_q(\balpha_i)=\gamma_i$. Since $\set{\balpha_i}_{i=1}^\infty$ converges to $\balpha^*$ in the $L^1$-norm, we have
$$
1/4=\lim_{i \to \infty} \gamma_i= \lim_{i \to \infty}\E_q(\balpha_i)=\E_q(\balpha^*)=\gamma^*.
$$ 
Then $\gamma^*=1/4$, and hence, by Theorem \ref{Opt5},   $\balpha^*$ solves $\OPT_q(1/4)$. Then Corollary \ref{G_graph_is_Turan} tells us that the graph $G_{\balpha^*}(n)=T_2(n)$ for every $n$. 

Let $\delta>0$ be chosen so that the conclusion of Theorem \ref{local_theorem} holds. By Proposition \ref{LPS1}, $G_{\balpha_i}(n_i)$ is $\delta n_i^2/2$-close to $G_{\balpha^*}(n_i)=T_2(n_i)$ for sufficiently large $i$, since $\balpha_i \to \balpha^*$. We may assume that $\delta_i \leq \delta/2$ for sufficiently large $i$ since $\delta_i \to 0$ as $i \to \infty$.
Consequently, as each $H_i$ is $\delta_i n_i^2$-close to $G_{\balpha_i}(n_i)$, then the edit distance between $H_i$ and $G_{\balpha^*}(n_i)=T_2(n_i)$ is at most
$$
\frac{\delta}{2}n_i^2+\delta_i n_i^2 \leq \frac{\delta}{2}n_i^2+\frac{\delta}{2}n_i^2 = \delta n_i^2.
$$
That is, $H_i$ is $\delta n_i^2$-close to $T_2(n_i)$ for sufficiently large $i$. However, this contradicts Theorem \ref{local_theorem}, finishing the proof of the theorem.
\end{proof}

%
%
%
%
\section{Concluding remarks and open problems}\label{six}
In Section \ref{one}, we presented Conjecture \ref{main_conj} and stated that although several cases of the conjecture had been solved for various ranges of $r$, $q$, and $n$, it was not true in general, as counterexamples were discovered in \cite{Ma}. Nonetheless, several cases of the conjecture remain open, one of them being the following conjecture.

\begin{conj} \label{main_conj6}
Let $r$ and $q$ be integers such that $2\leq r \leq 9$ and $r \leq q$. Then for all $n \geq r$, the Tur\'{a}n graph $T_r(n)$ has more $q$-colorings than any other graph with the same number of vertices and edges.
\end{conj}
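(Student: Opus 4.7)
The plan is to mimic and extend the three-step strategy used for Theorem \ref{main_theorem} across all residual pairs $(r, q)$ with $2 \le r \le 9$ and $r \le q$. The framework remains the reduction of Loh--Pikhurko--Sudakov: (i) solve the quadratic program \textbf{OPT} at $\gamma_r := (r-1)/(2r)$ analytically on a closed neighborhood $[\gamma_r - \epsilon, \gamma_r]$ and show that every solution $\balpha$ yields $G_{\balpha}(n) \cong T_r(n)$; (ii) prove a local version stating that any $(n, t_r(n))$-graph which is $\delta n^2$-close (in edit distance) to $T_r(n)$ has at most as many $q$-colorings as $T_r(n)$, with equality only for $T_r(n)$; (iii) combine (i) and (ii) with the compactness argument via Theorem \ref{LPS2} and Proposition \ref{LPS1}, exactly as in the proof of Theorem \ref{main_theorem}.

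First I would reduce the scope. Norine handles all $(r, q)$ with $r \mid q$ and $n$ large, and Ma--Naves handles $q \ge 100 r^2/\log r$. The current paper, together with earlier work of Lazebnik, Pikhurko, Woldar, and Tofts, disposes of $r = 2$. So the residual work for Conjecture \ref{main_conj6} is the \emph{finite} collection of pairs $(r, q)$ with $3 \le r \le 9$, $r \le q \lesssim r^2/\log r$, and $r \nmid q$. For each such pair, Theorem \ref{JackTheRipper2} already restricts the support of any solution to $\P \cup \Q_r$, i.e., an $r$-partition of $[q]$, possibly augmented by the union of two of its parts.

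For step (i), the plan is to first prove an analog of Lemma \ref{LemmaTech1} and Lemma \ref{Claim2}: an AMGM comparison applied to the constraint $\sum_S P_S(q) \beta_S \le 1 - 2 \gamma_r$ should force most of the $\ell^1$ weight of any near-optimal $\bbeta$ onto sets whose size is either $\lceil q/r \rceil$ or $\lfloor q/r \rfloor$. The analog of Claim \ref{Claim0} here is the monotonicity of $h_q(m) = \log(m/q) - r\sqrt{m/(q(r-1))}$ about the point $m = q(r-1)/r$, which generalizes the concavity argument used for $r = 2$; a similar reflection inequality $h_q(q(r-1)/r - k) < h_q(q(r-1)/r + k)$ should hold for all relevant $k$. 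Once concentration is established, the residual support structures allowed by Theorem \ref{JackTheRipper2} form a small, finite list for each fixed $(r, q)$. Each resulting instance of \textbf{OPT 2} reduces to a one- or two-variable quadratic program which can be maximized in closed form by Lagrange multipliers (or monotonicity, as in the $r = 2$ case analysis done in Cases 1--5 of the proof of Theorem \ref{Opt5}). The goal is to show that only the balanced $r$-partition with $\alpha_{A_i} = 1/r$ achieves $\OPT_q(\gamma_r) = \sum_{i=1}^r (1/r) \log |A_i|$, and then Corollary \ref{G_graph_is_Turan} generalizes to give $G_{\balpha}(n) \cong T_r(n)$.

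Step (ii) follows by repeating the proof of Theorem \ref{local_theorem} with $r$ parts instead of $2$. One partitions $V(G)$ into $A_1 \cupdot \cdots \cupdot A_r$ maximizing $e(G[A_1,\ldots,A_r])$, defines good/bad vertices and essential color sets $\R_f(i) \subseteq [q]$, shows these must be pairwise disjoint, and bounds $\P_G(\Rbold)$ in the two cases where $(|\R_f(1)|,\ldots,|\R_f(r)|)$ does or does not match a balanced partition of $[q]$. The auxiliary bound from Lemma \ref{FLBound} still applies; the combinatorial estimates go through with the same structure, using concentration of measure to rule out high-degree anomalies inside parts. Step (iii) is then verbatim the argument that proves Theorem \ref{main_theorem} from Theorem \ref{Opt5} and Theorem \ref{local_theorem}.

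The main obstacle is step (i) for small $q$. When $q$ is only slightly larger than $r$, many support structures in $\P_k$ and $\Q_k$ for $r \le k \le q$ are simultaneously plausible, and the concentration threshold analogous to the $0.76$/$0.80$ in Lemma \ref{Claim2} must be driven close to $1 - O(1/r)$ to eliminate them; the resulting AMGM/monotonicity calculations become delicate and must be done uniformly in $q$ over a finite but growing range. Further, when $q$ is not a multiple of $r$ the ``rounding'' between sizes $\lceil q/r \rceil$ and $\lfloor q/r \rfloor$ breaks the natural symmetry that Norine exploited for $r \mid q$, and the analytic estimates need a genuinely new asymmetric handling, as in this paper's treatment of odd $q$. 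It seems plausible that, for each fixed $r \in \{3, \dots, 9\}$, a uniform analytic argument works for all $q$ beyond a threshold computable in terms of $r$, and the finitely many remaining $(r, q)$ can then be verified by a rigorous computer-assisted solution of the small quadratic programs they give rise to.
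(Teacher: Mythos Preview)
The statement you are attempting is Conjecture \ref{main_conj6}, which the paper explicitly presents as an \emph{open problem} in Section \ref{six}; there is no proof in the paper to compare against. Your write-up is not a proof but a research plan, and you are candid about this (``should force'', ``should hold'', ``It seems plausible''). So the first issue is simply that no proof exists here: the analytic solution of \textbf{OPT} near $\gamma_r = (r-1)/(2r)$ for $3 \le r \le 9$ and $r \nmid q$ is genuinely unknown, and your step (i) is a sketch of a hoped-for argument, not an argument.

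Beyond that, there is a structural mismatch between the conjecture and your method. The Loh--Pikhurko--Sudakov framework you invoke (Theorem \ref{LPS2}, Proposition \ref{LPS1}, and the compactness argument of Section \ref{five}) is intrinsically asymptotic: it only yields conclusions for \emph{sufficiently large} $n$. Conjecture \ref{main_conj6}, however, asserts the extremality of $T_r(n)$ for \emph{all} $n \ge r$. Even if every step of your plan succeeded, you would obtain only the large-$n$ version; removing the ``sufficiently large $n$'' hypothesis requires entirely different techniques (as in the work of Lazebnik--Tofts and Tofts extending asymptotic results to all $n$), which your proposal does not address. Finally, your concentration heuristic in step (i) is optimistic: for small $q$ close to $r$ with $r \nmid q$, the weight threshold analogous to Lemma \ref{Claim2} may not be high enough to rule out competing support structures in $\Q_r$, and this is exactly the regime where Ma--Naves found counterexamples for $r \ge 10$; nothing in your outline explains why $r \le 9$ should behave qualitatively differently.
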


It seems that it may be difficult to resolve Conjecture \ref{main_conj6} for all $n \geq r$. However, asymptotic versions (for $n$ sufficiently large), may be more attainable by finding solutions of \textbf{OPT} for $2\leq r \leq 9$, $q \geq r$, and positive $\gamma$ that satisfy $\gamma \leq (q-1)/(2q)$.

%
%
%
\section{Acknowledgements}\label{ack}

I would like to thank Felix Lazebnik, my former advisor, for his all of his help and encouragement on this work while I was a doctoral student at the University of Delaware.


\begin{thebibliography}{9}

\bibitem{Wilf1}
 E. Bender and H. Wilf,
``A theoretical analysis of backtracking in the graph coloring problem",
   \emph{J. Algorithms}, \textbf{6} (1985), 275-282.



\bibitem{Birkhoff}
 G. Birkhoff, 
  ``A determinant formula for the number of ways of coloring a map",
   \emph{Adv. Math.}, \textbf{14}, 1912, 42-46.


\bibitem{Bela}
 B. Bollob\'{a}s, 
  \textit{Modern Graph Theory},
   Springer-Verlag, Berlin, 1998.
   
%

\bibitem{FL1}
 F. Lazebnik, 
  ``On the greatest number of 2 and 3 colorings of a $(v, e)$‐graph",
   \emph{J. Graph Theory}, \textbf{13}, 1989, 203-214.


\bibitem{FL3}
F. Lazebnik, 
  ``Some corollaries of a theorem of Whitney on the chromatic polynomial",
   \emph{Discrete Math.}, \textbf{87}, 1991, 53-64. 

\bibitem{FL6}
F. Lazebnik,
  ``The maximum number of colorings of graphs of given order and size: A survey",
   \emph{Discrete Math.}, \textbf{342 (10)}, 2019, 2783-2791.  


\bibitem{FL4}
F. Lazebnik, O. Pikhurko, and A. Woldar,
  ``Maximum number of colorings of $(2k, k^2)$-graphs",
   \emph{J. Graph Theory}, \textbf{56}, 2007, 135-148.


\bibitem{FL5}
F. Lazebnik, S.N. Tofts,
  ``An extremal property of Turán's Graphs",
   \emph{Electron. J. Combin.}, \textbf{17 (1)}, 2010, 1-11.   



  
   

   
%
 
   
     
   

  
   

     \bibitem{Linial}
 N. Linial, 
  ``Legal colorings of graphs",
   \emph{Combinatorica}, \textbf{6}, 1986, 49-54.
  
   
    \bibitem{Loh}
 P.-S. Loh, O. Pikhurko, and B. Sudakov, 
  ``Maximizing the number of $q$-colorings",
   \emph{Proc. London Math. Soc.}, \textbf{101}, 2010, 655-696.
   
       \bibitem{Ma}
 J. Ma and H. Naves, 
  ``Maximizing proper colorings on graphs",
   \emph{J. Comb. Theory Ser. B}, \textbf{115}, 2015, 236–275.
   
   
       \bibitem{Norine}
        S. Norine,
``Tur\'{a}n graphs and the number of colorings", 
   \emph{SIAM J Discrete}, \textbf{25}, 2011, 260-266.  
   
   
   
   \bibitem{Tofts}
S. N. Tofts,
  ``An extremal property of Turán's Graphs, II",
   \emph{J. Graph Theory}, \textbf{75 (3)}, 2014, 275-283.


\bibitem{Turan}
P. Turán,
  ``On an extremal problem in graph theory",
   \emph{Mat. Fiz. Lapok}, \textbf{48}, 1941, 436-452.




   \bibitem{Wilf2}
H.S. Wilf,
  ``Backtrack: an $O(1)$ expected time algorithm for the graph coloring problem",
   \emph{Inform. Process. Lett.}, \textbf{18}, 1984, 119-121.
   
\bibitem{Wilf}
   H.S. Wilf, \textit{Generatingfunctionology}, Elsevier Science, United States, 2013, 19.
   
   Wilf, Herbert S.. Generating Functionology. United States, Elsevier Science, 2013.
   
      \bibitem{Wilf3}
H.S. Wilf. Personal communication with Felix Lazebnik, 1982.
\end{thebibliography}
\end{document}